\title[] {Asymptotic free independence and entry permutations for Gaussian random matrices}
\author{Mihai Popa }
\address{Department of Mathematics, University of Texas at San Antonio, One UTSA Circle
San Antonio, Texas 78249, USA, and}
\address{``Simon Stoilow'' Institute of Mathematics of the Romanian Academy, P.O. Box 1-764, 014700 Bucharest, Romania}
\email{mihai.popa@utsa.edu}
\newtheorem{claim}{}[section]
\newtheorem{defn}[claim]{Definition}
\newtheorem{thm}[claim]{Theorem}
\newtheorem{lemma}[claim]{Lemma}
\newtheorem{remark}[claim]{Remark}
\newtheorem{cor}[claim]{Corollary}
\newcommand{\tr}{\textrm{tr}}
\newcommand{\cI}{\mathcal{I}(N, m)}
\newcommand{\oi}{\overrightarrow{i}}
\begin{document}

\begin{abstract}
 The paper presents conditions on entry permutations that induce asymptotic freeness when acting on Gaussian random matrices. The class of permutations described includes the matrix transpose, as well as entry permutations relevant in Quantum Information Theory and Quantum Physics.

\end{abstract}

\maketitle


\section{Introduction}

 The transpose is probably the most known matrix transformation given by a permutation of entries. The connection between the transpose and asymptotic free independence was established in \cite{mingo-popa-transpose}, when we showed the (at that time surprising) result that unitarily invariant ensembles of random matrices are asymptotically free from their transposes.
 
 In the recent years, other matrix transformations given by entry permutations become relevant in the literature. Such examples are the partial transposes, in Quantum Information Theory (see \cite{horod}, \cite{aubrun}, \cite{banica-nechita}), and the `mixing map' from Quantum Physics (see \cite{zycz}, \cite{billiard}). The present paper gives a unitary treatment of all these transforms when acting on Gaussian random matrices. Using combinatorial techniques, it is shown that they are part of a much larger and easy to describe category of entry permutations that induce asymptotic free independence. The techniques developed here were further utilized in \cite{popa-hao} and \cite{popa-hao-boolean},  in the study of random matrices with entries in non-commutative algebras.
 
 The paper is organized as follows.
  Second section presents the framework and some combinatorial lemmata that are utilized later for the main results. To be noted, the definition of asymptotic freeness used here, Definition \ref{defn:21}, requires the existence of limit distributions, same as in \cite{mingo-popa-transpose}; though, it is possible to build a similar theory without this requirement.
  Third section presents the main result of the paper, Theorem \ref{thm:1}, together with some of its applications. Theorem \ref{thm:1} gives two conditions sufficient for a family of entry permutations to induce asymptotic free independence when action on Gaussian random matrices. These conditions are shown to be satisfied by relevant classes of entry permutations. In particular, Corollary \ref{cor:trio} shows that a 
  $ N^2 \times N^2 $
  Gaussian random matrix, its 
  $ N \times N $ 
  partial transpose and the matrix obtained via applying the `mixing map' mentioned above form an asymptotically free family.
  Forth section contains two remarks on the statement of the main result. Namely Remark \ref{remark:4:1} gives an example of two families of entry permutations satisfying the second condition of the main theorem, but not the first one, and not inducing asymptotic freeness; Remark \ref{rem:4.2} gives an example showing that the conditions from Theorem \ref{thm:1} are not necessary, i.e. the class of entry permutations inducing asymptotic freeness when acting on Gaussian random matrices is actually larger than the one described in Theorem \ref{thm:1}.

\section{Preliminaries}

  Throughout this paper, by a random matrix we will understand, as in \cite{nica-speicher}, a matrix with entries in the $ \ast$-algebra
  \[
   L^{\infty - } ( \Omega, P )
    = \bigcap_{1\leq p < \infty } L^p (\Omega, P)
   \] 
where $(\Omega, P)$ is a probability space; the expectation   will be denoted by $ E $ 
(i.e. $E(\cdot) = \int \cdot dP $ ) and the $ \ast$-operation is the complex conjugate. 

If $ N $ is a positive integer, we shall denote by $ [ N ] $ the ordered set 
 $ \{ 1, 2, \dots, N \} $, 
 respectively by $ [ \pm N ] $ the set 
 $\{ 1, -1, 2, -2, \dots, N, - N \} $.
 The group of permutations on the set of couples 
 $  [ N ] \times [ N ] =\{ (i, j):\  i, j \in [N] \}$
 shall be denoted by 
  $ \mathcal{S} ( [ N]^2) $.

   For $ A $  a $ N \times N $ random matrix and 
   $ \sigma $ a permutation from
    $ \mathcal{S} ( [ N]^2) $, 
    we shall denote by 
    $ A^\sigma $
     the $ N \times N $ random matrix with each $(i, j)$-entry equal to the $ \sigma(i, j)$-entry of $A$. I.e., if $ [ A]_{i, j}$ denotes the $(i, j)$-entry of $A$, then
     \[ [A^\sigma]_{i, j} = [A]_{\sigma(i, j)}. \]
  In particular, if
         $ t \in \mathcal{S}( [ N ]^2) $
          is the permutation given by 
          $ t(i, j) = (j, i) $, 
          then $ A^t $ is the usual matrix transpose of $ A $.    
          
  Within the paper, by a noncommutative probability space it will be understood a pair
   $ (\mathfrak{A}, \varphi) $        
    where $ \mathfrak{A} $ is a unital $ \ast$-algebra over $ \mathbb{C} $ 
 and $ \varphi : \mathfrak{A} \rightarrow \mathbb{C} $ is a positive conditional expectation. No explicit assumptions will be made on a $C^\ast$- or von Neumann structure on $ \mathfrak{A} $, nor on the traciality of $ \varphi $. 
 \begin{defn}\label{defn:21}
 A family
  $\mathcal{A} = \big\{A_{k, N}: 1 \leq k \leq M, N \in \mathbb{N}  \big\} $ of random matrices such that each $ A_{k, N} $ is of size $ N \times N $ is said to be 
  \emph{asymptotically free} if there exists a non-commutative probability space
  $( \mathfrak{A}, \varphi)$ 
  and a free family $ \{ a_1, a_2, \dots, a_M \} $  from $ \mathfrak{A} $  
  such that, for any positive integer $ R $ and any $ R $-tuple 
  $ (i_1, \dots, i_R)\in [M]^R $ we have that
   \[ 
    \lim_{N \rightarrow \infty} E \circ \tr 
    \big( A_{i_1, N} A_{i_2, N} \cdots A_{i_R, N}\big) 
    = \varphi( a_{i_1} a_{i_2} \cdots a_{i_R}).
    \]
 \end{defn}  
   In particular, the definition above requires the existence of the joint limit distribution of the family $ \mathcal{A} $.
 \begin{defn}
 A  random matrix  $ G_N = [ g_{i j}]_{ 1 \leq i, j \leq N } $ is said to be Gaussian if 
 \begin{itemize}
 \item[(i)] $ g_{i, j} = \overline{ g_{j, i}}$ for all $ i, j \in [ N ] $;
 \item[(ii)]  $ \{ g_{i, j}: 1 \leq i \leq j \leq N \} $ is a family of independent, identically distributed complex $($if $i \neq j)$ or real $($if $ i = j )$ Gaussian random variables of mean 0 and variance 
 $ \displaystyle \frac{1}{\sqrt{N}} $.
 \end{itemize} 
 \end{defn}   
 
  Suppose that,  for each positive integer $ N $ and each $ k = 1, 2, \dots, m $, 
  $ \sigma_{ k, N } $ is a permutation from $ \mathcal{S} ([ N ]^2) $. The next results will discuss the expression
  \[ E \circ \tr 
  \Big( 
  G_N^{ \sigma_{1, N}} \cdot G_N^{ \sigma_{2, N}} 
  \cdots 
  G_N^{ \sigma_{m, N}} 
  \Big).
  \]
 For that we will use the connection, via the free and classical Wick's formulae (see \cite{effros-popa}, \cite{jason}), between the Gaussian, respectively semicircular random variables and pair partitions on an ordered set. 
  By a pairing or a pair-partition on $ [ m ] $ we shall understand an involution on $ [ m ] $ with no fixed points. The set of all pairings on $ [ m ] $ will be denoted by $ P_2(m)$. If $ \pi \in P_2(m)$, a block of $ \pi $ is a couple of the form $ \{ x, \pi(x)\} $ for some $ x \in [ m]$. If $ k, l \in [ m ] $ are in the same block of $ \pi $, we will say that $ \pi $ connects $ k $ and $ l $.
 For $ \pi $  a pairing from $ P_2(m) $, 
 a 4-tuple $(a, b, c, d) $ is said to be a \emph{crossing} if
 \[ a < b < c = \pi(a) < d = \pi(b). \]
  A pairing is said to be \emph{noncrossing} if it has no crossings. The set of all non-crossing pairings  $ [ m ] $ will be denoted by $ NC_2(m) $.
   
     To simplify the notations, the index $ N $ will be omitted when there is no confusion. Also, with the convention
      $ i_{  m+1 } = i_1 $,
       denote by $ \cI $ the set 
 \[ 
  \{ \oi = (i_1, i_{-1}, i_2, i_{-2}, \dots, i_{m}, i_{-m}) \in    [N]^{2m}  :\ 
        \ i_{-k}= i_{k +1} 
          \ \textrm{for}\ k \in [ m ] \}.
 \]
    Developing the trace and using Wick's formula (see \cite{jason}), we obtain
     \begin{align*}
     E \circ \tr \big( G^{\sigma_1} \cdot G^{\sigma_2} \cdots G^{\sigma_m}  \big)
     &=
     \sum_{ \oi \in \cI} \frac{1}{N}
     E \Big(    
     [ G^{\sigma_1}]_{i_1 i_{-1}}
      [ G^{\sigma_2}]_{i_2 i_{-2}}
     \cdots [ G^{ \sigma_m}]_{i_m i_{-m}}
     \Big)\\
     = & \sum_{ \sigma \in P_2 (m) } 
     \frac{1}{N}
     \sum_{ \oi \in \cI } 
     \prod_{(k, l) \in \pi }
     E \big(  g_{ \sigma_k (i_k i_{-k})} g_{\sigma_l ( i_l i_{-l})} \big) \\
     =&
     \sum_{ \sigma \in P_2 (m) }
     \frac{1}{N}
     \sum_{ \oi \in \cI }
      \prod_{(k, l) \in \pi }
     \frac{1}{N}
      \delta_{t \circ \sigma_l(i_l i_{-l})}^{\sigma_k(i_k i_{-k})}.
     \end{align*}    
   
   Therefore, denoting 
   \begin{equation}
   \label{v:V:1}
   \left\{
   \begin{array}{ll}
 v(\pi, \overrightarrow{\sigma}, \oi)
  & = 
 \displaystyle  \prod_{ (k, l) \in \pi } 
 \frac{1}{N}
 \delta_{t \circ \sigma_l(i_l i_{ -l })}^{\sigma_k(i_k i_{ - k })}\\
 \mathcal{V}(\pi, \overrightarrow{\sigma}) 
 = &
 \displaystyle \frac{1}{N}
    \sum_{ \oi \in \cI }
    v(\pi, \overrightarrow{\sigma}, \oi)\\
    & =
 \displaystyle
  N^{ - \frac{m}{2} -1 }
      \# \{ \oi \in \cI : \  
       v(  \pi , \overrightarrow{\sigma}, \oi)  \neq 0 \}
   \end{array}
   \right.
   \end{equation}
   we have  that
   \begin{equation}\label{eq:wick}
   E \circ \tr \big( G^{\sigma_1} \cdot G^{\sigma_2} \cdots G^{\sigma_m}  \big)
   = \sum_{ \pi \in P_2(m)} 
   \mathcal{V}(\pi, \overrightarrow{\sigma}) .
   \end{equation}

 Note that, by construction,
  $ v(\cdot, \cdot, \cdot) $ 
  and
  $ \mathcal{V} ( \cdot, \cdot) $
  have a property akin to traciality.
  More precisely, if 
  $ \lambda $ 
  is a circular permutation on the set 
  $ [ m ] $,
  then 
  \begin{align*}
  \mathcal{V} & ( \pi, \overrightarrow{\sigma} ) 
  =
  \mathcal{V} 
  ( \lambda \circ \pi \circ \lambda^{ -1} ,
   \overrightarrow{ \sigma } \circ \lambda)  \\ 
   v &( \pi, \overrightarrow{\sigma}, \overrightarrow{i} )
   =
   v(
   \lambda \circ \pi \circ \lambda^{ -1},
    \overrightarrow{\sigma} \circ \lambda, \overrightarrow{i} \circ \lambda 
    ), 
  \end{align*}
  where
  $ \overrightarrow{\sigma} \circ \lambda 
  = ( \sigma_{ \lambda(1)}, \sigma_{ \lambda(2)}, \dots, \sigma_{ \lambda(m)} )$
 and
 $ \overrightarrow{i} \circ \lambda
 = ( i_{ \lambda(1)}, i_{  - \lambda(1)}, \dots, 
 i_{ \lambda(m)}, i_{  - \lambda(m)}
 ) $.

 \begin{lemma}\label{lemma:ik}
 With the notations from above, if $ \pi $ is a \emph{crossing} pairing, then
 \[ \displaystyle \lim_{ N \rightarrow \infty} 
 \mathcal{V}(\pi, \overrightarrow{\sigma})  = 0. \]
 \end{lemma}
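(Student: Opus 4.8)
The plan is to reduce $\mathcal V(\pi,\overrightarrow{\sigma})$ to a counting estimate and then to extract from the crossing one power of $N$ beyond the trivial bound. By \eqref{v:V:1}, $\mathcal V(\pi,\overrightarrow{\sigma})=N^{-m/2-1}\,\#\{\oi\in\cI:\ v(\pi,\overrightarrow{\sigma},\oi)\neq 0\}$; writing $\oi=(i_1,\dots,i_m)$ with $i_{-k}=i_{k+1}$ and $i_{m+1}=i_1$, the condition $v(\pi,\overrightarrow{\sigma},\oi)\neq 0$ says precisely that for every block $(k,l)\in\pi$,
\[
\sigma_k(i_k,i_{k+1})=t\circ\sigma_l(i_l,i_{l+1}),
\]
equivalently that the ordered pair $(i_k,i_{k+1})$ is the image of $(i_l,i_{l+1})$ under the bijection $\sigma_k^{-1}\circ t\circ\sigma_l$ of $[N]^2$. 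So it suffices to prove $\#\{\oi\in\cI:\ v(\pi,\overrightarrow{\sigma},\oi)\neq 0\}\le C_m N^{m/2}$ for a constant $C_m$ depending only on $m$.

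The estimate $O(N^{m/2+1})$ — which is all one needs when $\pi$ is noncrossing — follows by transporting indices once around the cycle $1,\dots,m$. Fix $i_1$ (at most $N$ choices) and run through $p=1,\dots,m$ in increasing order, keeping track of which $i_p$ are pinned down. When $p$ is the smaller element of its block, the block equation couples $(i_p,i_{p+1})$ to the not-yet-visited pair $(i_{\pi(p)},i_{\pi(p)+1})$, so $i_{p+1}$ is a free choice; when $p$ is the larger element, $(i_{\pi(p)},i_{\pi(p)+1})$ and $i_p$ are already known, so the block equation both determines $i_{p+1}$ and imposes one scalar compatibility relation among the indices already chosen. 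Of the $m/2$ blocks one element (the ``opener'') is of the first type and one (the ``closer'') of the second; the steps $p=1,\dots,m-1$ already fix all of $i_1,\dots,i_m$, and the block at $p=m$ — whose second index is the already-fixed $i_1$ — contributes only a compatibility relation. This gives at most $N^{1+m/2}$ admissible tuples.

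The content of the lemma is that for crossing $\pi$ not all of these compatibility relations can be identically satisfied: at least one of them genuinely constrains the earlier choices, so the count drops to $O(N^{m/2})$. Using the tracial invariance $\mathcal V(\pi,\overrightarrow{\sigma})=\mathcal V(\lambda\circ\pi\circ\lambda^{-1},\overrightarrow{\sigma}\circ\lambda)$ under circular permutations $\lambda$, one may place a fixed crossing $a<b<\pi(a)<\pi(b)$ in a convenient position and then follow the two crossing block equations along the cycle to exhibit a closer whose compatibility relation cannot hold identically. I expect this step to be the main obstacle: in the classical case $\sigma_k=\id$ the count equals $N$ raised to the number of classes of index slots under the identifications $\pi$ forces, namely $m/2+1$ minus twice the genus of the associated surface, so a crossing (positive genus) costs a power of $N$ outright — but here the $\sigma_k$ are arbitrary permutations of $[N]^2$ that do \emph{not} act separately on the two index coordinates, so only \emph{pairs} $(i_k,i_{k+1})$ are transported and no single slots are identified, and the genus-type bookkeeping must be redone at the level of the pair-valued transports. (An alternative is to induct on $m$, first peeling off any block $\{k,k+1\}$ of two cyclically adjacent positions: summing over $i_{k+1}$ yields a factor $\tfrac{1}{N}$ that matches the change of normalization, the remaining permutations stay bijective after the induced substitution, and crossing is preserved since two cyclically adjacent positions are never part of a crossing, which reduces matters to an irreducible crossing $\pi$ handled by a refined transport count.)

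Granting $\#\{\oi\in\cI:\ v(\pi,\overrightarrow{\sigma},\oi)\neq 0\}\le C_m N^{m/2}$, \eqref{v:V:1} gives $\mathcal V(\pi,\overrightarrow{\sigma})\le C_m N^{-1}\to 0$, which is the claim.
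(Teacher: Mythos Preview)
Your setup is correct and matches the paper's: reduce to $\#\{\oi\in\cI:\ v(\pi,\overrightarrow{\sigma},\oi)\neq 0\}\le N^{m/2}$, and build the tuple block by block, gaining at most a factor $N$ per block. But the proof is incomplete exactly where you say it is. You write that ``for crossing $\pi$ not all of these compatibility relations can be identically satisfied,'' flag this as ``the main obstacle,'' and then finish by \emph{granting} the estimate. That is the entire content of the lemma, and your own remark explains why the genus heuristic does not transfer: since the $\sigma_k$ are arbitrary bijections of $[N]^2$, the closer-relations are of the form ``the first coordinate of $\sigma_p^{-1}\circ t\circ\sigma_{\pi(p)}(i_{\pi(p)},i_{\pi(p)+1})$ equals $i_p$,'' and there is no reason such a relation cannot hold identically in the free indices. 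Your alternative induction also bottoms out at an ``irreducible crossing $\pi$ handled by a refined transport count'' that is never carried out.

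The paper closes this gap by choosing the \emph{order in which the blocks are processed}, not the positions. After a circular shift making $1$ part of the crossing $(1,b,c,d)$, it sets $a(1)=1$ and at each step picks $a(k+1)$ from the interval $(b,c]$ (rightmost unused element) as long as possible, then switches to the smallest unused element. The point is that this forces some step $r$ with $a(r)=b$ and \emph{both} cyclic neighbours $b-1$ and $b+1$ already in $P_{r-1}$: the crossing $\pi(b)=d>c$ prevents $b$ from appearing as a $\pi$-partner earlier, so it must appear as the minimum once $(b,c]$ is exhausted. At that step, knowing the pairs at positions $b-1$ and $b+1$ determines $i_b=i_{-(b-1)}$ and $i_{-b}=i_{b+1}$ via the bijectivity of $\sigma_{b-1}$ and $\sigma_{b+1}$ (the paper's property $(\mathfrak{p}.3)$), so the pair at position $b$ is \emph{uniquely} fixed rather than free up to a factor $N$; its $\pi$-partner is then fixed by the block equation. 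This is the step that saves exactly one power of $N$ and yields $\#A_{m/2}\le N^{m/2}$. The argument uses only that each $\sigma_k$ is a bijection of $[N]^2$, which is why it works uniformly in $\overrightarrow{\sigma}$---something your compatibility-relation formulation cannot deliver without a mechanism of this kind.
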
  
   \begin{proof}
    Through a circular permutation, we can suppose, without restricting the generality, that 
    $ ( 1 , b, c, d) $ is a crossing of $ \pi $. 
     
     Next we will define the sets 
  $ \{ P_k \}_{ 1 \leq k \leq \frac{m}{2} } $
  via
   $ P_1 = \{ a(1), \pi ( a (1))\} $
   and 
   $ P_{k +1} =  P_k \cup \{ a(k+1), \pi ( a (k+1))\} $
   where the sequence
    $ \{ a(k)\} _{1 \leq k \leq \frac{m}{2} } $
    is given as follows.
    
  We put $ a(1) = 1 $ (hence $ P_1 = \{ 1, c \} $), let
   $ B_k = \{ l:  \  b < k \leq c \textrm{ and } l \notin P_k \} $ and
  \[
   a(k+1) = \left\{ 
   \begin{array}{l }
   \max (B_k),  \textrm{ if } B_k \neq \emptyset; \\
   \min \{l \in [ m ]:\ l \notin P_k \}, \textrm{ otherwise}. 
   \end{array}\right.
   \]  
    
    First, remark that there exists $ r \in [ m ] $ such that $ a (r) = b $. 
    
    Indeed, by construction, the set $ [ m ] $ is the disjoint union of 
    $ \{ a(k): \  1 \leq k \leq \frac{m}{2} \} $
    and 
    $ \{ \pi ( a( k )): \  1 \leq k \leq \frac{m}{2} \} $.
    Also, we have that 
    $ b < a(k+1) \leq c $ 
    if and only if 
    $ B_k \neq \emptyset $.
    If $ b = \pi ( a ( k ) ) $ for some $ k \in [ \frac{m}{2} ] $, then
    $ a(k) = \pi(b) = d $. Since $ d > c $, it follows that $ B(k -1) = \emptyset $, which would give that
    \[  d = a(k) = \min \{ l \in [ m ]: l \notin P_{ k-1} \}
    \]
    and also
     $ b = \pi(d) \notin P_{k-1} $, which contradicts $ b > d $.
     
     Moreover, $ r $ has the property that
      $ b-1 $ and $ b+1 $ are elements of $ P_{r-1}$.

      To see that, note first that $ a(r) = b \notin (b, c] $ gives
       $ B_{r-1} = \emptyset$ so $ b + 1 \in P_{r-1}$. Also, $ B_{r-1}= \emptyset$ implies that
        $ b = a(r) = \min \{ l\in [ m]:\ l \notin P(r-1) \} $ which implies
         $ b-1 \in P_{r-1}$.
     \setlength{\unitlength}{.14cm}
     \begin{equation*}
     \begin{picture}(20,13)
    
    \put(-16.5,0){1}
    
    \put(-16,3){\circle*{2}} 
    
    
    \put(-12.5,0 ){2}
    
    \put(-12,3){\circle*{2}}
    
    
    \put(-8.5,0){3}
    
    \put(-8,3){\circle*{2}}
    
    
    \put(-4.5,0){4}
    
    \put(-3.9,3){\circle*{2}}
    
    
    \put(-0.5,0){5}
    
    \put(0,3){\circle{2}}
    
    
    \put(3.5,0){6}
    
    \put(4,3){\circle*{2}}
    
    
    \put(7.5,0){7}
    
    \put(8.1,3){\circle*{2}}
    
    
    \put(11.5,0){8}

    \put(12.1,3){\circle*{2}}
    
    
    \put(15.5,0){9}
    
    \put(16.2,3){\circle*{2}}
    
    
    \put(19,0){10}
    
    \put(20.2,3){\circle*{2}}
    
    
    \put(23.5,0){11}
    
    \put(24.2,3){\circle*{2}}
    
    
    \put(27.5,0){12}
    
    \put(28.2,3){\circle*{2}}

    
     \put(-12,3){\line(0,1){9}}
     
     \put(24, 3){\line(0,1){9}}
     
    \put(24, 12){\line(-1,0){36}}

     
     \put(-8,3){\line(0,1){5}}
     
     \put(-8,8){\line(1,0){4}}
     
     \put(-4,3){\line(0,1){5}}
     
     
     \put(4,3){\line(0,1){4}}
     
     \put(4,7){\line(1,0){12}}
     
     \put(16,3){\line(0,1){4}}
     
     
     \put(8,3){\line(0,1){3}}
     
     \put(8,6){\line(1,0){4}}
     
     \put(12,3){\line(0,1){3}}

    \linethickness{.55mm}

    \put(-16,3){\line(0,1){7}}
    
    \put(20,3){\line(0,1){7}}
    
    \put(-16,10){\line(1,0){36}}

    
    \put(0,4){\line(0,1){4}}

    \put(28, 3){\line(0,1){5}}
    
    \put(28, 8){\line(-1,0){28}}
     \end{picture}
      \end{equation*}
    \textbf{Figure 1}. In the diagram above, 
    with the crossing
     $(1, 5, 10, 12) $,
     we have that
    $ a(1) = 1 $,
     $ a(2) = 9 $, 
    $ a(3) = 8 $,
     $ a(4) = 2 $,
      $ a(5) = 3 $,
      $ a(6) = 5 $, 
      so 
      $ r = 6 $.

       For the next step of the proof, we need to introduce more notations. 
       First, if
      $  \oi = (i_1, i_{-1}, i_2, i_{-2}, \dots,   i_{m}, i_{-m}) $
          is an element of $ [ N]^{2m} $
       and if $ B = \{ \beta_1, \beta_2, \dots, \beta_p \} $ is a subset of $ [ m ] $ such that 
       $ \beta_k < \beta_l $ whenever $ k < l $, then 
       we denote
       \[
       \oi [  B ] = 
       ( i_{\beta_1}, i_{- \beta_1},
       i_{\beta_2}, i_{-\beta_2}, \dots,
        i_{\beta_p}, i_{-\beta_p} ) \in [ N ]^{2p}.
       \]
       
       With this notation, define
        \[
             A_{\frac{m}{2}} = \{ \oi \in I(N, m): \
              v ( \pi, \overrightarrow{\sigma}, \oi) \neq 0 \},
               \]
   and, for 
   $ 1 \leq k \leq \frac{m}{2} $,
   denote
   \[ A_k = \{ \overrightarrow{\alpha} \in [ N]^{4k} :
    \textrm{there exists } \oi \in A_{ \frac{m}{2} }
    \textrm{ such that } \overrightarrow{\alpha} = \oi [ P_k ]  
    \}
    \]
  In particular, second equation from (\ref{v:V:1}) reads
   \[ 
    \mathcal{V}(\pi, \overrightarrow{\sigma}) 
    =
    N^{ - \frac{m}{2} -1 }
      \# (A_{\frac{m}{2}}).
    \]
    So it suffice to prove that
    \begin{equation}
    \label{eq:am}
    \# (A_{\frac{m}{2}}) \leq N^{\frac{m}{2}}.
    \end{equation} 

  In order to show (\ref{eq:am}), note first that the set 
  $ I (N, m) $
   satisfies the following properties (by convention,
   $i_{\pm (m+k)}= i_{ \pm k} $):
   \begin{itemize}
   \item[$(\mathfrak{p}.1)$] For each 
   $ p \in [ m ] $ 
   and each pair 
   $ (a_1, a_2) \in [ N ]^2 $,
    there exist at most $ N $ pairs 
    $ (x_1, x_2) \in [ N ]^2$
     such that there is some 
     $ \oi \in I (N, m)  $
      with
     \[  (a_1, a_2, x_1, x_2)  =
      ( \sigma_p(i_p, i_{-p}), \sigma_{ p+1}( i_{p+1}, i_{-(p+1)})) \]
      \item[$(\mathfrak{p}.2)$] For each 
       $ p \in [ m ] $ 
       and each pair 
       $ (a_1, a_2) \in [ N ]^2 $,
        there exist at most $ N $ pairs 
        $ (x_1, x_2) \in [ N ]^2$
         such that there is some 
         $ \oi \in I (N, m)  $
          with
         \[  ( x_1, x_2, a_1, a_2)  =
          (  \sigma_{ p-1}( i_{p-1}, i_{-(p-1)}),\sigma_p(i_p, i_{-p})) \]
   \item[$(\mathfrak{p}.3)$] For each $ p \in [ m ] $
   and each $ 4 $-tuple 
   $(a_1, a_2, a_3, a_4)  \in [ N ]^4 $,
   there exists at most one pair 
   $(x_1, x_2) \in[ N]^2 $
   such that there is some 
   $ \oi \in I (N, m)  $
   with
   \[
    (a_1, a_2, x_1, x_2, a_3, a_4) 
   =
   ( \sigma_p (i_p, i_{-p}),  \sigma_{p+1} (i_{p+1}, i_{-(p+1)}), \sigma_{ p+1} ( i_{p+2}, i_{-(p+2)}) ).
   \]
   \end{itemize}
   
   First, 
       $ \oi \in I (N, m) $  
  gives that
      $ i_{-p} = i_{p+1} $ 
          and
           $ i_{-(p+1)} = i_{p+2} $,
  henceforth
  $ (x_1, x_2) = \sigma_{p+1} ( i_{ p+1}, i_{p+2}) $
        and
        $( i_{p}, i_{p+1}) =\sigma_p^{-1}(a_1, a_2)  $.
   In particular, 
   $ i_{p+1} $
    is uniquely determined by 
    $(a_1, a_2) $, 
     therefore
      $ (x_1, x_2) $ 
      is uniquely determined by the triple 
         $ (a_1, a_2, i_{ p+2}) $ 
         so the property
          $(\mathfrak{p}.1)$
   is proved.

      Similarly, 
   $( i_{p+1}, i_{p+2}) =\sigma_{p+2}^{-1}(a_3, a_4)  $, hence
    $i_{p+2} $ is uniquely determined by $(a_3, a_4) $.
     So properties $(\mathfrak{p}.2)$  and  $(\mathfrak{p}.3)$ also follow.

     Furthermore, the set 
     $ A_{ \frac{m}{2}} $
     satisfies the following property:
     \begin{itemize}
     \item[$(\mathfrak{p}.4)$] For each $ p \in [ m ] $ and each couple 
     $(a_1, a_2) \in [ N ]^2 $ 
     there exist at most one couple
      $ (x_1, x_2) $ 
      such that whenever
      $ \oi \in A_{\frac{m}{2}} $
       and
     $   \big(i_{a(p)}, i_{-a(p)}\big)  = (a_1, a_2)$,
    then
     $  \big( 
        i_{\pi(a(p))}, i_{-\pi(a(p))}
        \big) = (x_1, x_2) $. 
     \end{itemize}    
     
   This follows because if 
   $ v( \pi, \overrightarrow{\sigma}, \oi) \neq 0 $
   then
   \[
   \sigma_{\pi(a(p))} \big( 
   i_{\pi(a(p))}, i_{-\pi(a(p))}
   \big)
   = t \circ \sigma_{a(p)}
   \big(i_{a(p)}, i_{-a(p)}\big) 
   \]
   which is equivalent to
   $ ( x_1, x_2) = 
    \sigma_{\pi(a(p))}^{-1}\circ  t \circ \sigma_{a(p)}
      \big( a_1, a_2 \big)
      $.

   In particular, for $ p =1 $, property $(\mathfrak{p}.4)$ gives that 
   \begin{equation}
   \label{eq:a1}
   \# (A_1) \leq N^2.
   \end{equation}
   
   Next, remark that properties $(\mathfrak{p}.1)$, $(\mathfrak{p}.2)$ and $(\mathfrak{p}.4)$ give that
   \begin{equation}
   \label{eq:a2}
   \#(A_{k+1}) \leq N \cdot \# ( A_k ).
   \end{equation}

   To prove (\ref{eq:a2}) it suffices to show that
   given 
   $ \overrightarrow{\alpha} $ 
   an element of $ A_k $, there exist at most $ N $
   $ 4 $-tuples 
   $(x_1, x_2, x_3, x_4)  \in [ N ]^4 $
   such that there exist some
   $ \oi \in A_{ \frac{m}{2}} $
   with
   $ \oi [ P_k ] = \overrightarrow{\alpha} $
   and 
   $ \oi [ P_{k+1} \setminus P_k ]
    = (x_1, x_2, x_3, x_4) $.

    By construction,
     $ P_{k+1} \setminus P_k = \{ a( k+ 1 ), \pi(a( k+ 1 ))\} $
   with either 
   $a( k+ 1 )+1 \in P_k $ if $ B_k \neq \emptyset $,
  or $ a(k + 1 )-1 \in P_k $ if $ B_k = \emptyset $.
    So, from properties $(\mathfrak{p}.1)$ and $(\mathfrak{p}.2)$,  for each $ \overrightarrow{\alpha} \in A_k $, there are at most $ N $ couples $ (y_1, y_2) $ such that 
   $ \oi[ P_k] = \overrightarrow{\alpha} $
   and 
   $ \oi[\{ a(k)\}] = (y_1, y_2) $.
   
   Furthermore, property $(\mathfrak{p}.4)$ gives that for each couple
    $ (y_1, y_2) $ there exist at most one couple $(y_3, y_4) $ 
   such that 
   $ \oi [ \{ a(k)\}] = (y_1, y_2) $ 
   and 
   $ \oi [ \{ \pi( a(k) )\}] = (y_3, y_4) $ 
   for some $ \oi \in A_{\frac{m}{2}}.$
   Therefore the proof of (\ref{eq:a2}) is complete.   
   
   Equation (\ref{eq:a2}) give that
   \[ \# ( A_{\frac{m}{2}} ) 
      \leq N^{\frac{m}{2} - r }
      \cdot 
      \# ( A^{r}),
      \]
   while equations (\ref{eq:a1}) and (\ref{eq:a2}) imply
      \[ 
      \# (A_{ r -1}) \leq N^{r}. 
      \]
  Therefore, to complete the proof, it suffices to prove that
    \begin{equation}
    \label{eq:a3}
    \#(A_{r} )\leq \# ( A_{r-1} ).
    \end{equation}
    
   To prove (\ref{eq:a3}), remember that, as  noted above,
   $ a(r) = b $ 
   and
         $ b-1,\ b+1 \in P_{ r -1 } $. 
   So property
         $(\mathfrak{p}.3)$
     gives that
     $ \oi [ \{ a(r)\}] $ 
     is uniquely determined by 
     $ \oi [ P_{r-1}] $.
  Furthermore, property
   $(\mathfrak{p}.4)$
    gives that 
        $ \oi [ \{ \pi (a(r))\} ] $ 
        is, in turn, uniquely determined by 
        $ \oi [ \{ a(r)\}] $. 
   So
    $ \oi [ P_r ]  $
    is uniquely determined by
     $ \pi[ p_{r-1}] $,
     and the conclusion follows.
       \end{proof}   
       
   For 
   $ \sigma $
   and 
   $ \tau $
   two permutations in 
   $ \mathcal{S} ( [ N ]^2 ) $, 
   we denote
   \begin{align*}
   \mathfrak{j}( \sigma: \tau ) = \# \big\{ 
   (i, j, k) \in [ N ]^3 : \sigma ( i, j ) = t \circ \tau \circ t 
    ( k, j ) 
   \big\}.
   \end{align*}    
 With this notation, we have the following result.

  \begin{lemma}\label{lemma:ik1} 
   Let 
   $ \pi \in NC_2 (m ) $ 
   be such that 
   $ \pi ( k ) = k + 1 $
   for some 
   $ 1 \leq k \leq m -1 $.
   If
   $
    \mathfrak{j}( \sigma_{ k, N}: \sigma_{ k+1, N } ) = o ( N^2 ) ,
    $
   then
    \[
      \lim_{ N \rightarrow \infty } 
      \mathcal{V}(\pi, \overrightarrow{\sigma} )= 0.
      \]
  \end{lemma}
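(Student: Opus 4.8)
The plan is to adapt the counting scheme from the proof of Lemma~\ref{lemma:ik}. Set $A = \{ \oi \in \cI : \ v(\pi, \overrightarrow{\sigma}, \oi) \neq 0 \}$, so that, by (\ref{v:V:1}), $\mathcal{V}(\pi, \overrightarrow{\sigma}) = N^{-\frac{m}{2}-1}\,\#(A)$; it suffices to show $\#(A) = o(N^{\frac{m}{2}+1})$. The key observation is that the $\delta$-factor of $v$ coming from the block $\{k, k+1\}$ of $\pi$ forces $\sigma_k( i_k, i_{-k}) = t\circ\sigma_{k+1}(i_{k+1}, i_{-(k+1)})$, and since $\oi \in \cI$ gives $i_{-k} = i_{k+1}$ and $i_{-(k+1)} = i_{k+2}$, this reads
\[
\sigma_k(i_k, i_{k+1}) = t\circ\sigma_{k+1}(i_{k+1}, i_{k+2}) = t\circ\sigma_{k+1}\circ t(i_{k+2}, i_{k+1}),
\]
which is precisely the defining relation of $\mathfrak{j}(\sigma_{k,N}: \sigma_{k+1,N})$ after renaming the triple $(i,j,k)$ in that definition by $(i_k, i_{k+1}, i_{k+2})$. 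Hence the number of triples $(i_k, i_{k+1}, i_{k+2})$ that occur in elements of $A$ is at most $\mathfrak{j}(\sigma_{k,N}: \sigma_{k+1,N}) = o(N^2)$ (the same bound holds in the degenerate case $k+1 = m$, where $i_{k+2} = i_1$).

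Next I would build a nested chain of subsets $P_1 \subset P_2 \subset \cdots \subset P_{\frac{m}{2}} = [m]$, each $P_j$ a union of $j$ blocks of $\pi$, with $P_1 = \{k, k+1\}$. At a stage $j < \frac{m}{2}$ the set $[m] \setminus P_j$ is a nonempty union of cyclic arcs (with $m+1 \equiv 1$); choosing an endpoint $c$ of one such arc and putting $a(j+1) = c$, $P_{j+1} = P_j \cup \{c, \pi(c)\}$, one obtains a block $\{c, \pi(c)\}$ disjoint from $P_j$ (as $P_j$ is $\pi$-invariant) with $c$ cyclically adjacent to an element of $P_j$. As in the proof of Lemma~\ref{lemma:ik}, put $A_j = \{ \oi[P_j] : \ \oi \in A \}$, so that $A_{\frac{m}{2}} = A$; since $\oi[P_1] = (i_k, i_{k+1}, i_{k+1}, i_{k+2})$ is determined by the triple $(i_k, i_{k+1}, i_{k+2})$, the previous paragraph yields $\#(A_1) \leq \mathfrak{j}(\sigma_{k,N}: \sigma_{k+1,N})$.

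For the inductive step I would reuse properties $(\mathfrak{p}.1)$, $(\mathfrak{p}.2)$ and $(\mathfrak{p}.4)$ from the proof of Lemma~\ref{lemma:ik}: because $a(j+1)$ is cyclically adjacent to $P_j$, one of the two entries of $\oi[\{a(j+1)\}]$ is forced, via a relation $i_{-l} = i_{l+1}$ defining $\cI$, to coincide with an entry of $\oi[P_j]$, so there are at most $N$ possible values of $\oi[\{a(j+1)\}]$ once $\oi[P_j]$ is fixed; and the $\delta$-factor of the block $\{a(j+1), \pi(a(j+1))\}$ then determines $\oi[\{\pi(a(j+1))\}]$ uniquely. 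Thus $\#(A_{j+1}) \leq N \cdot \#(A_j)$, and iterating,
\[
\#(A) = \#(A_{\frac{m}{2}}) \leq N^{\frac{m}{2}-1}\,\#(A_1) \leq N^{\frac{m}{2}-1}\,\mathfrak{j}(\sigma_{k,N}: \sigma_{k+1,N}) = N^{\frac{m}{2}-1}\cdot o(N^2) = o(N^{\frac{m}{2}+1}),
\]
whence $\mathcal{V}(\pi, \overrightarrow{\sigma}) \to 0$. (For $m = 2$ one has $A_1 = A$ and there is nothing more to prove.)

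I expect the step that needs the most care is the inequality $\#(A_{j+1}) \leq N \cdot \#(A_j)$: one must check that the cyclic adjacency of $a(j+1)$ to $P_j$ genuinely pins down one coordinate through the relations defining $\cI$, and that property $(\mathfrak{p}.4)$ applies with index $j+1$, all while keeping the cyclic conventions consistent (notably when $k+1 = m$, so $i_{k+2} = i_1$). It is also worth remarking that noncrossingness of $\pi$ is not actually used here: the chain $\{P_j\}_j$ exists for an arbitrary pairing, and the hypothesis $\pi \in NC_2(m)$ merely serves to separate this statement from Lemma~\ref{lemma:ik}, which already handles the crossing case.
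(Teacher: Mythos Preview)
Your proof is correct and follows essentially the same approach as the paper's: the paper first applies a circular permutation to reduce to $k=1$, then builds the chain $P_j$ via $a(j+1) = \min\{t \notin P_j\}$ (a particular choice of your ``endpoint of a cyclic arc''), bounds $\#(A_1)$ by $\mathfrak{j}(\sigma_{1,N}:\sigma_{2,N})$ exactly as you do, and iterates $\#(A_{j+1}) \leq N\cdot\#(A_j)$ using $(\mathfrak{p}.1)$ and $(\mathfrak{p}.4)$. Your observation that noncrossingness is not actually used is also accurate.
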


 \begin{proof}
  We shall use similar ideas and techniques as in proof of Lemma \ref{lemma:ik}.
  First, through a circular permutation, we can suppose, without restricting the generality, that 
  $ k =1 $.
 
 Let   $ a(1) =1$, $ P_1 = \{1, 2\} $ 
  and, inductively,
  $ P_{k+1} = P_k \cup \{ a(k+1), \pi( a(k+1))\} $,
  where
  $ a(k+1) = \min \{ t \in [ m ]:\  t \notin P_k \} $ (see also Example 2 below).
     \setlength{\unitlength}{.13cm}
      \begin{equation*}
      \begin{picture}(20,11)
     
     \put(-20.5,0){1}
     
     \put(-20,3){\circle*{2}} 
     
     
     \put(-15.5,0 ){2}
     
     \put(-15,3){\circle*{2}}
     
     
     \put(-10.5,0){3}
     
     \put(-10,3){\circle*{2}}
     
     
     \put(-5.5,0){4}
     
     \put(-4.9,3){\circle*{2}}
     
     
     \put(-0.5,0){5}
     
     \put(0,3){\circle*{2}}
     
     
     \put(4.5,0){6}
     
     \put(5,3){\circle*{2}}
     
     
     \put(9.5,0){7}
     
     \put(10,3){\circle*{2}}
     
     
     \put(14.5,0){8}
     
      \put(15,3){\circle*{2}}
     
     
     \put(19.5,0){9}
     
     \put(20,3){\circle*{2}}
     
     
     \put(24,0){10}
     
     \put(25,3){\circle*{2}}
     
 
      
      \put(-10,3){\line(0,1){5}}
      
     \put(-10,8){\line(1,0){15}}
      
      \put(5,3){\line(0,1){5}}
      
      
      \put(-5,3){\line(0,1){4}}
      
      \put(-5,7){\line(1,0){5}}
      
      \put(0,3){\line(0,1){4}}
      
      
      \put(10,3){\line(0,1){4}}
      
      \put(10,7){\line(1,0){10}}
      
      \put(20,3){\line(0,1){4}}

     \put(25,3){\line(0,1){6}}
     
     \put(15,3){\line(0,1){6}}
     
     \put(25,9){\line(-1,0){10}}

      \linethickness{.55mm}

     
     \put(-20,4){\line(0,1){4}}

    \put(-15, 3){\line(0,1){5}}
     
     \put(-15, 8){\line(-1,0){5}}
      \end{picture}
       \end{equation*}
  
   \textbf{Example 3}. In the diagram above, 
   $P_1=\{ 1, 2\}$, $a(2) = 3 $, 
   $ P_2 = \{1, 2, 3, 6 \} $,
   $a(3) = 4 $, $ P_3 = \{ 1, 2, 3, 4, 5, 6\}$,
  $ a(4) = 7 $ and $ a(5)=8$. 
  
  \bigskip
  
  In particular, $ P_{ \frac{m}{2} } = [ m ] $ and for each
  $k > 1 $ we have that
  \begin{equation}
   \label{eq:pk}
   \{1, 2, \dots, a(k) -1 \} \subset P_k.
   \end{equation}
   
    As in the proof of Lemma \ref{lemma:ik}, for each
     $ k \leq \frac{m}{2} $,
     denote
        $ \displaystyle A_k =  \{ \oi[ P_k]:\  \oi \in A_{\frac{m}{2}} \} $.  
  Again,
           $ A_{\frac{m}{2}} = \{ \oi \in I(N, m):\ 
             v( \pi, \overrightarrow{\sigma}, \oi) \neq 0 \} $ and
   for each   
   $ k $ 
   and each
   $ \overrightarrow{\alpha} \in A_k $,
   property 
     $(\mathfrak{p}.1)$
     and relation 
     (\ref{eq:pk})
     give that
     \begin{align*}
     \#\big\{
     \oi[\{ a(k+1)\}]: \oi \in A_{ \frac{m}{2}}, \oi[ P_k] = \overrightarrow{\alpha}
     \big\} \leq N. 
     \end{align*}
  Also,  whenever
       $ \oi \in A_{ \frac{m}{2} } $,
       property
             $(\mathfrak{p}.4)$,
     gives that  
      $ \oi [ \{ \pi ( a(k+1))\}] $
      is uniquely determined by
      $ \oi [ \{ a(k+1)\}] $. 
      Therefore
      $
             \#( A_{k+1} ) \leq N \cdot \# ( A_k ),
             $
        which gives
             \[
                    \#( A_{\frac{m}{2}} ) \leq N^{ \frac{m}{2} -1} \cdot \# ( A_2 ).
                    \]

   On the other hand, 
   \begin{align*}
   A_2 = \big\{ ( i_1, i_{ -1}, i_2, i_{-2} ) \in [ N]^4: 
   \overrightarrow{i} [ \{ 1, 2\} ] = (i_1, i_1, i_{ -1}, i_2, i_{-2} ) )
   \textrm{ for some } 
   \overrightarrow{i} \in A_{ \frac{m}{2} }
   \big\}.
   \end{align*}  
  Since 
   $  A_{ \frac{m}{2}} \subseteq I ( N , m) $,
   we have that
   $ i_{ -1} = i_2 $, 
   hence
   \begin{align*}
      A_2 = & \big\{ ( i_1, i_2, i_2, i_{-2} ) \in [ N]^4: 
      \overrightarrow{i} [ \{ 1, 2\} ] = (i_1, i_1, i_{ -1}, i_2, i_{-2} ) )
      \textrm{ for some } 
      \overrightarrow{i} \in A_{ \frac{m}{2} }
      \big\}\\
       \subseteq &
       \big\{ ( i_1, i_2, i_2, i_{-2} ) \in [ N]^4: 
      \sigma_{1, N} (i_1, i_2) =
       t \circ \sigma_{2, N} ( i_2, i_{-2} )
             \big\},
      \end{align*} 
   which gives
   \begin{align*}
  \# (A_2) \leq \mathfrak{j}(\sigma_{1, k}: \sigma_{2, N}),
   \end{align*}
 therefore
    $ \# (A_{ \frac{m}{2} })   = o (N^{ \frac{m}{2} + 1}) $,
    which gives
     $ \displaystyle \mathcal{V}(\pi, \overrightarrow{\sigma} ) =
    o(1)$,
   and the conclusion follows. 
 \end{proof}
 
 \begin{lemma} \label{lemma:2:5}
 Let 
 $ \pi \in P_2(m) $ 
 be such that
  $ \pi(1) =2 $.
  If for all
   $ N $
   we have that
  $ \sigma_{1, N} = t \circ \sigma_{2, N} \circ t $,
  then
   \begin{align*}
   \mathcal{V} ( \pi , \overrightarrow{\sigma}) =
   \left\{
   \begin{array}{l l}
   1, & \textrm{ if } m =2\\
   \mathcal{V}( \pi^\prime, \overrightarrow{\sigma}^\prime),
    & \textrm{ if } m > 2.
   \end{array}
   \right.
   \end{align*}
   where 
   $ \pi^\prime $ 
   is obtained by removing the block 
   $ (1, 2) $
   from 
   $ \pi $,
   and
   $ \overrightarrow{\sigma}^\prime 
   = ( \sigma_3, \sigma_4, \dots, \sigma_m ) $.
 \end{lemma}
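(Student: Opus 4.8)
The plan is to trace precisely the contribution of the block $(1,2)$ of $\pi$ inside the sum defining $\mathcal{V}(\pi,\overrightarrow{\sigma})$ in (\ref{v:V:1}), and to show that it amounts to a factor $1/N$ together with a ``deletion'' that replaces $\pi$ by $\pi'$ and $\overrightarrow{\sigma}$ by $\overrightarrow{\sigma}'$. First I would isolate that contribution: $v(\pi,\overrightarrow{\sigma},\oi)$ is a product over the blocks of $\pi$, the block $\{k,l\}$ contributing $\frac1N\,\delta_{t\circ\sigma_l(i_l i_{-l})}^{\sigma_k(i_k i_{-k})}$; for $\oi\in I(N,m)$ one has $i_{-1}=i_2$ and $i_{-2}=i_3$ (with the cyclic convention $i_{m+1}=i_1$), so the block $(1,2)$ contributes $\frac1N\,\delta_{t\circ\sigma_2(i_2 i_3)}^{\sigma_1(i_1 i_2)}$. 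Substituting $\sigma_1=t\circ\sigma_2\circ t$ rewrites $\sigma_1(i_1,i_2)$ as $t\circ\sigma_2(i_2,i_1)$, and since $\sigma_2$ — hence $t\circ\sigma_2$ — is a bijection of $[N]^2$, the Kronecker symbol collapses to $\delta_{i_1}^{i_3}$. Thus the block $(1,2)$ contributes the scalar $\frac1N$ when $i_1=i_3$ and forces $v(\pi,\overrightarrow{\sigma},\oi)=0$ otherwise.

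When $m=2$ this already settles it: $(1,2)$ is the only block, the condition $i_3=i_1$ is vacuous, so $v\equiv\frac1N$ on $I(N,2)$, which has $N^2$ elements, giving $\mathcal{V}(\pi,\overrightarrow{\sigma})=\frac1N\cdot N^2\cdot\frac1N=1$. When $m>2$, I would set up a deletion map: to $\oi\in I(N,m)$ with $i_1=i_3$ assign $\oi':=\oi[\{3,4,\dots,m\}]$, regarded as an element of $I(N,m-2)$ after the relabelling $j\mapsto j-2$; the key point is that $\oi[\{3,\dots,m\}]=(i_3,i_4,i_4,i_5,\dots,i_{m-1},i_m,i_m,i_1)$ genuinely lies in $I(N,m-2)$ precisely because $i_1=i_3$ forces its last coordinate to coincide with its first. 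Putting $\overrightarrow{\sigma}'=(\sigma_3,\dots,\sigma_m)$ and letting $\pi'$ be the pairing on $[m-2]$ obtained by deleting $\{1,2\}$ from $\pi$ and shifting, one checks that $(i_k,i_{-k})$ equals the corresponding coordinate pair of $\oi'$ for every $k\in\{3,\dots,m\}$ (here $i_1=i_3$ is used again at the wrap-around index $k=m$); hence each factor of $v(\pi,\overrightarrow{\sigma},\oi)$ coming from a block other than $(1,2)$ is exactly a factor of $v(\pi',\overrightarrow{\sigma}',\oi')$. Together with the previous paragraph this gives $v(\pi,\overrightarrow{\sigma},\oi)=\frac1N\,v(\pi',\overrightarrow{\sigma}',\oi')$ whenever $i_1=i_3$.

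To conclude, I would observe that the deletion map is onto $I(N,m-2)$ and exactly $N$-to-one (the coordinate $i_2$ being the only one forgotten), so, using that $v$ vanishes off $\{i_1=i_3\}$,
\begin{align*}
\mathcal{V}(\pi,\overrightarrow{\sigma})
&= \frac1N\sum_{\oi\in I(N,m),\ i_1=i_3} v(\pi,\overrightarrow{\sigma},\oi)
= \frac1{N^2}\sum_{\oi\in I(N,m),\ i_1=i_3} v(\pi',\overrightarrow{\sigma}',\oi') \\
&= \frac1N\sum_{\oi'\in I(N,m-2)} v(\pi',\overrightarrow{\sigma}',\oi') = \mathcal{V}(\pi',\overrightarrow{\sigma}').
\end{align*}
Unlike Lemmas \ref{lemma:ik} and \ref{lemma:ik1}, there is no asymptotic estimate involved here: everything is an exact identity for each fixed $N$. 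The only delicate point is the bookkeeping for the deletion map — checking that it really lands in $I(N,m-2)$ and is genuinely $N$-to-one, and that the cyclic index conventions match at the block containing $m$ — which is exactly the step where the hypothesis $\sigma_1=t\circ\sigma_2\circ t$, i.e.\ $i_1=i_3$, gets used.
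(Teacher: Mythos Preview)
Your proof is correct and follows essentially the same approach as the paper's own argument: isolate the factor coming from the block $(1,2)$, use the hypothesis $\sigma_1=t\circ\sigma_2\circ t$ and the bijectivity of $t\circ\sigma_2$ to reduce it to $\frac{1}{N}\,\delta_{i_1}^{i_3}$, and then collapse the sum over $\mathcal{I}(N,m)$ to one over $\mathcal{I}(N,m-2)$. The only cosmetic difference is that the paper defines $\oi'=(i_1,i_{-3},i_4,i_{-4},\dots,i_m,i_{-m})$ (so that $\oi'\in\mathcal{I}(N,m-2)$ for \emph{every} $\oi$, and the identity $v(\pi,\overrightarrow{\sigma},\oi)=\frac1N\,\delta_{i_1}^{i_{-2}}v(\pi',\overrightarrow{\sigma}',\oi')$ holds globally), whereas you take $\oi'=\oi[\{3,\dots,m\}]$ and restrict to $\{i_1=i_3\}$; since $i_1=i_3$ on the support of the Kronecker delta, the two choices coincide where it matters.
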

 
\begin{proof}
If
 $ m = 2 $, 
 then 
 $ \pi = (1, 2) $ 
 and
  \begin{align*}
  \mathcal{V}( \pi, \overrightarrow{\sigma})
  = \frac{1}{N}
  \sum_{i_1, i_2 = 1}^N
  E \big( g_{\sigma_{1, N}( i_1, i_2)} g_{ \sigma_{2, N}(i_2, i_1)}  \big)
  = \frac{1}{N}
   \sum_{i_1, i_2 = 1}^N
  \frac{1}{N} 
  \delta_{\sigma_{1, N}(i_1, i_2)}^{ t \circ \sigma_{2, N}\circ t
  (i_1, i_2)} = 1.
  \end{align*}
  
If
 $ m > 2 $, 
 remark that remark that for any
  $ \overrightarrow{i} = (i_1, i_{-1}, \dots, i_{m}, i_{-m} ) $
        from
        $ I(N, m) $
  we have that      
       $ \overrightarrow{i}^\prime 
          =
          (i_1, i_{-3}, i_4, i_{-4}, \dots, i_{m}, i_{-m})  $
     is an element of the set
     $ I(N, m-2)$.  
       Equations (\ref{v:V:1}) gives then
        \begin{align*} 
         v( \pi, \overrightarrow{\sigma}, \oi) =
         \frac{1}{N} 
         \delta_{i_1}^{i_{-2}} 
         \cdot 
         v( \pi^\prime, \overrightarrow{\sigma}^{\prime}, \oi^\prime),
       \end{align*}
        henceforth
          \begin{align*}
             \mathcal{V}( \pi, \overrightarrow{\sigma}) &=
            \frac{1}{N}
             \sum_{\oi \in I(N, m)}
             \frac{1}{N} \delta_{i_1}^{i_{-2}}
             v( \pi^\prime, \overrightarrow{\sigma}^{\prime}, \oi^\prime)\label{eq:25}\\
           & = 
             \frac{1}{N} \sum_{i_2 \in [ N ]}
           \mathcal{V}( \pi^\prime, \overrightarrow{\sigma}^{\prime} ) = 
             \mathcal{V}( \pi^\prime, \overrightarrow{\sigma}^{\prime} )\nonumber
             \end{align*}
       and the conclusion follows.
\end{proof}


 We shall use Lemmata \ref{lemma:ik}, \ref{lemma:ik1} and
 \ref{lemma:2:5} for the main results of this paper, in the next two sections.


\section{Main results}

\begin{lemma}\label{lemma:3:1}
 Suppose that for any positive integer  
 $ N $,
 $ G_N $ 
 is a 
 $ N \times n $
 Gaussian random matrix and that
 $ \mu_N $
  is a permutation from
  $ \mathcal{S}( [ N ]^2 ) $.
  If
  $ \mathfrak{j} ( \mu_N : \mu_N ) = o( N^2 ) $,
  then
  $ G_N^{ \mu_N } $ 
  is asymptotically circularly distributed of variance 1;
  more precisely
  \[
  \lim_{N \rightarrow \infty} \kappa_2 
  \big( G_N^{\mu_N}, (G_N^{\mu_N})^\ast\big) 
  =
  \lim_{N \rightarrow \infty} \kappa_2 
  \big( (G_N^{\mu_N})^\ast, G_N^{\mu_N} \big)
  =1
  \]
  and all other free cumulants of $ G_N^{\mu_N} $ 
  and $ (G_N^{\mu_N})^\ast $
  cancel asymptotically.
\end{lemma}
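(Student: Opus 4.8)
The plan is to compute every mixed $\ast$-moment of $G_N^{\mu_N}$, match the limits with the moments of a circular element of variance $1$, and then read off the free cumulants. The starting observation is that $(G_N^{\mu_N})^\ast = G_N^{\nu_N}$ for $\nu_N := t \circ \mu_N \circ t$: since $G_N$ is Hermitian, $[(G^{\mu})^\ast]_{i,j} = \overline{[G]_{\mu(j,i)}} = [G]_{t\circ\mu\circ t(i,j)}$. Hence, for any $\epsilon_1,\dots,\epsilon_m \in \{1,\ast\}$,
\[
E\circ\tr\big( (G^{\mu})^{\epsilon_1}\cdots (G^{\mu})^{\epsilon_m}\big) = E\circ\tr\big( G^{\sigma_1}\cdots G^{\sigma_m}\big),
\]
where $\sigma_k = \mu_N$ if $\epsilon_k = 1$ and $\sigma_k = \nu_N$ if $\epsilon_k = \ast$. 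By \eqref{eq:wick} this equals $\sum_{\pi\in P_2(m)}\mathcal{V}(\pi,\overrightarrow{\sigma})$, and by Lemma \ref{lemma:ik} the crossing $\pi$ contribute $0$ in the limit, so everything reduces to computing $\lim_N \mathcal{V}(\pi,\overrightarrow{\sigma})$ for $\pi\in NC_2(m)$.

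I would show by induction on $m$, using the cyclic invariance of $\mathcal{V}$ and the fact that every noncrossing pairing contains an interval block $\{k,k+1\}$, that for $\pi\in NC_2(m)$ one has $\lim_N\mathcal{V}(\pi,\overrightarrow{\sigma}) = 1$ if every block $\{k,l\}$ of $\pi$ satisfies $\epsilon_k\neq\epsilon_l$, and $\lim_N\mathcal{V}(\pi,\overrightarrow{\sigma})=0$ otherwise. After a cyclic rotation we may take the interval block to be $\{1,2\}$. If $\epsilon_1\neq\epsilon_2$ then $\{\sigma_1,\sigma_2\}=\{\mu_N,\nu_N\}$, so $\sigma_1 = t\circ\sigma_2\circ t$ in either order, and Lemma \ref{lemma:2:5} gives $\mathcal{V}(\pi,\overrightarrow{\sigma}) = 1$ when $m=2$, and $\mathcal{V}(\pi,\overrightarrow{\sigma}) = \mathcal{V}(\pi^\prime,\overrightarrow{\sigma}^\prime)$ with the block $\{1,2\}$ removed when $m>2$; since the removed block is mixed, $\pi^\prime$ has all blocks mixed iff $\pi$ does, so the induction hypothesis applies. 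If $\epsilon_1=\epsilon_2$ then $\sigma_1=\sigma_2\in\{\mu_N,\nu_N\}$, and I need $\mathfrak{j}(\sigma_1:\sigma_2)=o(N^2)$: for $\sigma_1=\mu_N$ this is the hypothesis, while for $\sigma_1=\nu_N$ one uses $t\circ\nu_N\circ t = \mu_N$ to see that $(i,j,k)\mapsto(k,j,i)$ is a bijection from the defining set of $\mathfrak{j}(\nu_N:\nu_N)$ onto that of $\mathfrak{j}(\mu_N:\mu_N)$, whence $\mathfrak{j}(\nu_N:\nu_N)=\mathfrak{j}(\mu_N:\mu_N)=o(N^2)$. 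Lemma \ref{lemma:ik1} then gives $\mathcal{V}(\pi,\overrightarrow{\sigma})\to0$, consistent with $\{1,2\}$ not being mixed.

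Summing over the finitely many $\pi\in P_2(m)$ then yields
\[
\lim_{N\to\infty} E\circ\tr\big( (G^{\mu})^{\epsilon_1}\cdots (G^{\mu})^{\epsilon_m}\big) = \#\big\{\pi\in NC_2(m) : \text{each block of }\pi\text{ connects an }\epsilon_i=1\text{ to an }\epsilon_j=\ast\big\},
\]
which is exactly $\varphi(c^{\epsilon_1}\cdots c^{\epsilon_m})$ for $c$ a circular element of variance $1$. Since every free cumulant of a family is a fixed polynomial in its moments, convergence of all $\ast$-moments of $G_N^{\mu_N}$ to those of $c$ forces convergence of all its free cumulants to those of $c$; in particular $\kappa_2(G^{\mu},(G^{\mu})^\ast)\to1$, $\kappa_2((G^{\mu})^\ast,G^{\mu})\to1$, and every other free cumulant of the pair $\{G^{\mu},(G^{\mu})^\ast\}$ tends to $0$. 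The conceptual point I would stress is that the ``mixed / not mixed'' dichotomy for interval blocks lines up precisely with the ``$\sigma_1 = t\circ\sigma_2\circ t$ / $\mathfrak{j}(\sigma_1:\sigma_2)=o(N^2)$'' dichotomy feeding Lemmata \ref{lemma:2:5} and \ref{lemma:ik1}; the main work is then the induction bookkeeping together with the small symmetry computation $\mathfrak{j}(\nu_N:\nu_N)=\mathfrak{j}(\mu_N:\mu_N)$.
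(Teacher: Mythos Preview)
Your proof is correct and follows essentially the same route as the paper: reduce to $\sum_{\pi\in P_2(m)}\mathcal{V}(\pi,\overrightarrow{\sigma})$ via \eqref{eq:wick}, kill crossing pairings with Lemma~\ref{lemma:ik}, and for $\pi\in NC_2(m)$ rotate an interval block to position $\{1,2\}$ and apply Lemma~\ref{lemma:2:5} when $\epsilon_1\neq\epsilon_2$ and Lemma~\ref{lemma:ik1} when $\epsilon_1=\epsilon_2$. Your explicit verification that $\mathfrak{j}(\nu_N:\nu_N)=\mathfrak{j}(\mu_N:\mu_N)$ via the bijection $(i,j,k)\mapsto(k,j,i)$ is a detail the paper leaves implicit here (it is stated only later, in the proof of Theorem~\ref{thm:1}).
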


\begin{proof}

From the definition of the free cumulants, it suffice to show that,
  for any 
  $ m $ 
  positive integer and any 
 $ \epsilon: [ m ] \rightarrow \{ 1, \ast \} $,
 we have 
 \begin{align*}
 \lim_{ N \rightarrow \infty }
 E \circ \tr 
 \big( 
 (G_N^{\mu_N})^{ \epsilon(1)} (G_N^{\mu_N})^{ \epsilon(2)} 
 \cdots
 (G_N^{ \mu_N } )^{ \epsilon(m)} 
 \big) = 
 \# NC_2^\epsilon ( m ),
 \end{align*} 
  where
  $ NC_2^{\epsilon} ( m ) = \{ \pi \in NC_2 (m ): 
  \epsilon(k) \neq \epsilon(l) 
  \textrm{ whenever } ( k, l) \in \pi 
  \}$.
  
  Since for any 
  $ \sigma \in \mathcal{S}( [ N ]^2 )$, we have that
   $ ( G_N^\sigma )^\ast = G_N^{ t \circ \sigma \circ t } $,
  using equation (\ref{eq:wick}), it suffices to show that
  \begin{equation}
  \label{V:01}
  \lim_{N \rightarrow \infty} 
  \mathcal{V}( \pi , \overrightarrow{\sigma_N}) = \left\{
  \begin{array}{c l}
  0, & \textrm{ if } \overrightarrow{\sigma_N} \in P_2(m) \setminus NC_2^\epsilon ( m ) \\
  1, & \textrm{ if } \overrightarrow{\sigma_N}
   \in NC_2^\epsilon ( m ).
  \end{array}
  \right.
  \end{equation}
where
$ \overrightarrow{ \sigma_N} = 
( \sigma_{1,  N}, \sigma_{2, N} \dots, \sigma_{m, N} ) $
with
$ \sigma_{k, N } = 
\left\{ \begin{array}{c l}
  \mu_N, & \textrm{ if } \epsilon ( k ) = 1 \\
  t \circ \mu_N \circ t , & \textrm{ if } \epsilon ( k ) = \ast.
  \end{array}
\right. $

If
 $ \pi \notin NC_2(m) $,
  then (\ref{V:01}) follows from Lemma \ref{lemma:ik}. 
If
 $ \pi \in NC_2( m ) $,
 we can suppose without loss of generality that
 $ \pi $ 
 is the juxtaposition of 
 $ (1, 2) $ 
 and some pairing 
 $ \pi ^ \prime $
 from
 $ NC_2( m - 2 ) $.
 If 
 $ \epsilon (1) = \epsilon (2) $,
 then  the condition
  $ \mathfrak{j} ( \mu_N : \mu_N ) = o(N^2) $
  and  Lemma \ref{lemma:ik1} gives that
   $ \displaystyle \lim_{ N \rightarrow \infty }
   \mathcal{V} ( \pi , \overrightarrow{\sigma_N} )
    = 0 $.
  If
  $ \epsilon(1) \neq \epsilon(2) $, 
  then the conclusion follows from Lemma \ref{lemma:2:5} 
  and an inductive argument on 
  $ m $.
\end{proof}


 \begin{thm}\label{thm:1}
 Suppose that
  $ M_1, M_2 $
   are positive integers and  that, for each positive integer 
   $ N $,
 $ \big\{ \mu_{k, N}: \ 1 \leq  k \leq  M_1+ M_2  \big\} $
 is a family of permutations from 
 $ \mathcal{S}[([N]^2) $ 
 such that
 \begin{itemize}
\item[(i)]$ \mu_{k, N} $ 
is symmetric for each 
$ k \leq M_1 $, 
and
$ \mathfrak{j}( \mu_{k, N}: \mu_{ k, N} ) = o(N^2) $
for each
$ M_1 < k < M_1 + M_2  $
\item[(ii)] whenever
 $ a \neq b $, 
 we have that
 \[
 \# \big\{ (i, j, k) \in [ N ]^3:\ 
  \mu_{a, N} ( i, j) \in \{
  \mu_{b, N} (i, k),
   \mu_{ b, N }( k, j ), 
    t \circ \mu_{b, N} \circ t  ( i, k ),
    t \circ \mu_{b, N} \circ t ( k, j)
  \}
 \big\}
  \]
  is 
  $ o(N^2) $.
 \end{itemize}

    If 
    $ G_N $ 
    is an
     $ N \times N $
     Gaussian random matrix, then the limit in distribution of the family 
  $\big\{ G_N^{\mu_{k, N}}: k \in [ M ] \big\}$
   is a free family of
    $ M_1 $  
    semicircular and
    $ M_2 $
    circular non-commutative random variables of variance $ 1 $.
\end{thm}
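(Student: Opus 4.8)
The plan is to compute the limit of $E\circ\tr$ of an arbitrary $*$-monomial in the matrices $G_N^{\mu_{k,N}}$ and their adjoints, and match it against the corresponding $*$-moment of the target family, read off from the free moment-cumulant formula. Realize $\{s_1,\dots,s_{M_1}\}\cup\{c_{M_1+1},\dots,c_{M_1+M_2}\}$ as a free family of variance-$1$ semicircular ($s_i$) and circular ($c_j$) elements in a \emph{tracial} $*$-probability space $(\gA,\varphi)$. Since $(G_N^\sigma)^*=G_N^{\,t\circ\sigma\circ t}$, every $*$-monomial has the form $G_N^{\sigma_1}\cdots G_N^{\sigma_m}$ with $\sigma_j\in\{\mu_{k_j,N},\,t\circ\mu_{k_j,N}\circ t\}$, $k_j\in[M_1+M_2]$; attach to position $j$ the symbol $x_j=s_{k_j}$ if $k_j\le M_1$, $x_j=c_{k_j}$ if $k_j>M_1$ and $\sigma_j=\mu_{k_j,N}$, and $x_j=c_{k_j}^*$ if $k_j>M_1$ and $\sigma_j=t\circ\mu_{k_j,N}\circ t$. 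Call a block $\{k,l\}$ of a pairing \emph{admissible} if $(x_k,x_l)\in\{(s_i,s_i)\}\cup\{(c_j,c_j^*),(c_j^*,c_j)\}$. Since for a free semicircular/circular family every free cumulant vanishes except $\kappa_2(s_i,s_i)=\kappa_2(c_j,c_j^*)=\kappa_2(c_j^*,c_j)=1$, the moment-cumulant formula (see \cite{nica-speicher}) gives $\varphi(x_1\cdots x_m)=\#\{\pi\in NC_2(m):\text{every block of }\pi\text{ is admissible}\}$, so by Definition \ref{defn:21} it is enough to prove that $E\circ\tr(G_N^{\sigma_1}\cdots G_N^{\sigma_m})$ converges to the same number.

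By (\ref{eq:wick}) this trace equals $\sum_{\pi\in P_2(m)}\mathcal V(\pi,\vec\sigma)$, and Lemma \ref{lemma:ik} removes all crossing $\pi$ in the limit, so it suffices to show: for $\pi\in NC_2(m)$, $\lim_N\mathcal V(\pi,\vec\sigma)$ equals $1$ if all blocks of $\pi$ are admissible and $0$ otherwise. I would prove this by induction on $m$ (the cases of odd $m$ or of $m=0$ being trivial). Every nonempty $\pi\in NC_2(m)$ contains an interval block $\{k_0,k_0+1\}$; using the cyclic invariance of $\mathcal V$ stated before Lemma \ref{lemma:ik}, I may assume $\pi(1)=2$ and analyse the block $\{1,2\}$. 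If $\{1,2\}$ is admissible, then in each of its three possible types a direct check gives $\sigma_1=t\circ\sigma_2\circ t$ (for instance, for type $(c_j,c_j^*)$ one has $\sigma_1=\mu_{j,N}$, $\sigma_2=t\circ\mu_{j,N}\circ t$, hence $t\circ\sigma_2\circ t=\mu_{j,N}=\sigma_1$), so Lemma \ref{lemma:2:5} yields $\mathcal V(\pi,\vec\sigma)=1$ when $m=2$ and $\mathcal V(\pi,\vec\sigma)=\mathcal V(\pi',\vec\sigma')$ when $m>2$, where $\pi'\in NC_2(m-2)$ is obtained by deleting the block and $\vec\sigma'$ loses its first two entries; the remaining blocks, and the symbols attached to them, are unchanged, so the induction hypothesis closes this case.

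If $\{1,2\}$ is not admissible I claim $\mathfrak{j}(\sigma_1:\sigma_2)=o(N^2)$, and then Lemma \ref{lemma:ik1} gives $\lim_N\mathcal V(\pi,\vec\sigma)=0$. Either $k_1\ne k_2$: running through the four placements of transposes on $\sigma_1,\sigma_2$ and using $t\circ t=\id$ together with the coordinate relabelling $(i,j,k)\mapsto(j,i,k)$, the quantity $\mathfrak{j}(\sigma_1:\sigma_2)$ becomes in each case one of the four counts that hypothesis (ii), applied with $(a,b)=(k_1,k_2)$, declares to be $o(N^2)$. Or $k_1=k_2=:a$: non-admissibility then forces $a>M_1$ (if $a\le M_1$, symmetry of $\mu_{a,N}$ makes $\{1,2\}$ of type $(s_a,s_a)$, which is admissible) together with $\sigma_1=\sigma_2$; if $\sigma_1=\sigma_2=\mu_{a,N}$ the claim is exactly hypothesis (i), and if $\sigma_1=\sigma_2=t\circ\mu_{a,N}\circ t$ a short computation gives $\mathfrak{j}(t\circ\mu_{a,N}\circ t:t\circ\mu_{a,N}\circ t)=\mathfrak{j}(\mu_{a,N}:\mu_{a,N})=o(N^2)$ by (i). This completes the induction and hence the theorem.

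The main obstacle is the bookkeeping in the non-admissible case: one has to verify carefully that the single hypotheses (i) and (ii) are exactly strong enough to force $\mathfrak{j}(\sigma_1:\sigma_2)=o(N^2)$ for \emph{all} distributions of transposes over the two factors of the interval block (including the identity $\mathfrak{j}(t\circ\sigma\circ t:t\circ\sigma\circ t)=\mathfrak{j}(\sigma:\sigma)$), which is precisely where the asymmetric-looking shape of condition (ii) is used. A minor but necessary point, already built into the choice of model above, is that the target family must live on a tracial space, so that the limits asserted in Definition \ref{defn:21} are compatible with the cyclicity of $E\circ\tr$ and of $\mathcal V$.
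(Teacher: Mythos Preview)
Your proposal is correct and follows essentially the same route as the paper's proof: expand via the Wick formula (\ref{eq:wick}), kill crossing pairings with Lemma \ref{lemma:ik}, then handle $\pi\in NC_2(m)$ by rotating an interval block to $(1,2)$ and splitting into the case $\sigma_{1,N}=t\circ\sigma_{2,N}\circ t$ (Lemma \ref{lemma:2:5} plus induction) versus the case $\mathfrak{j}(\sigma_{1,N}:\sigma_{2,N})=o(N^2)$ (Lemma \ref{lemma:ik1}). The paper packages the verification that condition (ii) controls all four transpose placements into the identity (\ref{cond:j}), which is exactly the computation you sketch with $t\circ t=\id$ and the relabelling $(i,j,k)\mapsto(j,i,k)$; your case analysis is the same content written out by hand.
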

\begin{proof}
Condition (i) gives that, for
$ 1 \leq k \leq M_1 $,
 each 
$ G_N^{\mu_{ k, N}} $
is selfadjoint and asymptotically semicircular of variance 
$ 1 $,
and, for
$ M_1 + 1 \leq k \leq M_2 $,
according to Lemma \ref{lemma:3:1},
each
$ G_N^{ \mu_{k, N}} $
is asymptotically circular of variance 
$ 1 $.

Fix 
 $ m $ 
 a positive integer;
 let 
 $ \mathfrak{f} : [ m ] \rightarrow [ M_1 + M_2 ] $
 be a given map and let
 $ \epsilon: [ m ] \rightarrow \{ 1, \ast \} $
 be a map such that
$ \epsilon( s ) = 1 $ 
whenever
  $ l ( s ) \leq M_1 $.
  
  It suffices though to show that in the moment-free cumulant decomposition of 
   \[
    E \circ \tr \big( ( G_N^{\mu_{ \mathfrak{f}(1), N}})^{ \epsilon(1)}  
   ( G_N^{\mu_{ \mathfrak{f} ( 2 ), N}})^{ \epsilon( 2 )}   
   \cdots
   ( G_N^{\mu_{ \mathfrak{f} ( m ), N}})^{ \epsilon(m)} 
          \big) \] 
 all the free cumulants vanish asymptotically, except for the ones either of the form
  $ \kappa_2 (  G_N^{ \mu_{k, N}}, G_N^{ \mu_{k, N}} ) $
 with
  $ k \leq M_1 $, 
  or of one of the forms
  $ \kappa_2 (  G_N^{ \mu_{k, N}}, (G_N^{ \mu_{k, N}})^\ast ) $,
 $ \kappa_2 ( (G_N^{ \mu_{k, N}})^\ast,  G_N^{ \mu_{k, N}},  )$
 with
  $ M_1 < k \leq M_1 + M_2 $.
  
  Using condition (i) and the fact that 
  $  ( G_N^\sigma )^\ast = G_N^{ t \circ \sigma\circ t } $,
   an equivalent form of the statement above is 
  \[
  \lim_{ N \rightarrow \infty }
   E \circ \tr \big(  G_N^{\sigma_{ 1 , N}}
      G_N^{\sigma_{  2 , N}} 
     \cdots
     G_N^{\sigma_{ m , N}}
            \big) 
            =
            \# NC_2^{\mathfrak{f}, \epsilon} ( m )
  \]
  where
 \begin{align*}
 NC_2^{\mathfrak{f}, \epsilon} ( m ) = \{ \pi \in NC_2 ( m ): 
 \mathfrak{f}(s) =  \mathfrak{f}( \pi (s) ) \textrm{  for all } 
  s \in [ m ]
 &  \textrm { and }  \epsilon(s) \neq \epsilon ( \pi(s) )\\
  & \textrm{ whenever } \mathfrak{f} ( s ) > M_1 \},
 \end{align*}
 and
 \begin{align*}
 \sigma_{k, N} = \left\{ 
 \begin{array}{l l}
 \mu_{\mathfrak{f}(k), N}, & \textrm{ if } \epsilon(k) =1\\
 t \circ \mu_{\mathfrak{f}(k), N} \circ t, & \textrm{ if }
 \epsilon(k) = \ast.
 \end{array} \right.
 \end{align*}
 On the other hand, from Lemma \ref{lemma:ik}, we have 
 \begin{align*}
  \lim_{ N \rightarrow \infty }
    E \circ \tr \big(  G_N^{\sigma_{ 1 , N}}  
       G_N^{\sigma_{  2 , N}}  
      \cdots
       G_N^{\sigma_{  m , N}}
             \big) 
             =
         \sum_{ \pi \in NC_2 ( m ) }
             \lim_{N \rightarrow \infty}
             \mathcal{V} ( \pi, \overrightarrow{ \sigma_N} ).
 \end{align*}
  So it suffices to show that 
  \begin{align}\label{main:V}
  \lim_{N \rightarrow \infty }
  \mathcal{V} ( \pi, \overrightarrow{ \sigma_N} )
  =
  \left\{
  \begin{array}{l l}
  1,  & \textrm{ if } \pi \in NC_2^{ \mathfrak{f}, \epsilon } (  m )\\
  0, & \textrm{ otherwise}.
  \end{array}
  \right.
  \end{align}
  
  We shall show (\ref{main:V}) using the same technique as in the proof of Lemma \ref{lemma:3:1}. Without loss of generality, we can suppose (via a circular permutation) that 
  $ \pi $ 
  is the juxtaposition of the block
  $ (1, 2) $
  and some pairing 
  $ \pi^\prime $
  from
   $ NC_2 ( m - 2 ) $.
   
  For any 
  $ \sigma, \tau \in \mathcal{S} ( [ N ]^2 ) $,
   the definition of
   $ \mathfrak{j}( \sigma : \mu ) $, 
   gives that
   \begin{align}\label{cond:j}
   \#\big\{ (i, j, k):\ 
   \sigma(i, j) \in
    \{
    \mu( i, k), \mu ( k, j), 
     t \circ & \mu \circ t (i, k),
     t \circ \mu \circ t ( k, j) 
   \}
   \big\}\\
   & = \frac{1}{2}\sum_{ 
    \substack{\tau_1 \in \{ \sigma, t \circ \sigma \circ t \}\\ \tau_2 \in \{ \mu, t \circ \mu \circ t   \}} 
    }
    \mathfrak{j} ( \tau_1 : \tau_2 ) 
    + \mathfrak{j} ( \tau_2 : \tau_1 ).\nonumber
   \end{align}
   In particular, if the left hand side of equation (\ref{cond:j}) is
    $ o( N^2 )$,
    so is each of the eight terms in the right hand side.
  So condition (ii) gives that 
  $ \mathfrak{j} (\sigma_{1, N}: \sigma_{2, N} ) = o(N^2) $
  whenever
 $ \mu_{\mathfrak{f}( 1 ) , N} \neq \mu_{\mathfrak{f} ( 2 ), N} $.
  Henceforth, utilizing Lemma \ref{lemma:ik1}, we have that
   $ \displaystyle \lim_{N \rightarrow \infty }
   \mathcal{V} ( \pi, \overrightarrow{\sigma_N } ) 
   = 0 $
   unless 
   $ \mu_{\mathfrak{f}( 1 ) , N} = \mu_{\mathfrak{f} ( 2 ), N} $.  
   
   Suppose that 
   $ \mu_{\mathfrak{f}( 1 ) , N} = \mu_{\mathfrak{f} ( 2 ), N} $. 
   If 
   $ \mathfrak{f}(1) \leq M_1 $, 
   then 
   $ \epsilon(1) = \epsilon(2) = 1 $
   and, from condition (i) we have that
  $ \mu_{\mathfrak{f}( 1 ), N} = 
   t \circ \mu_{\mathfrak{f} ( 2 ), N} \circ t $
 so (\ref{main:V}) follows from Lemma \ref{lemma:2:5} and an inductive argument on
 $ m $. 
  Suppose then that
  $ \mathfrak{f}( 1 ) > M_1 $.
  If
  $ \epsilon (1) = \epsilon ( 2 ) $, 
 condition (i) gives that
  \[
  \mathfrak{j} ( \sigma_{1, N} : \sigma_{2, N} ) 
  = \mathfrak{j} ( \mu_{ \mathfrak{f}(1), N} :
   \mu_{\mathfrak{f} ( 1 ), N} )
    = \mathfrak{j} (  t \circ \mu_{ \mathfrak{f}(1), N} \circ t :
      t \circ \mu_{\mathfrak{f} ( 1 ), N} \circ t  )
      = o( N^2 )
 \]
 so
  $ \displaystyle \lim_{N \rightarrow \infty }
    \mathcal{V} ( \pi, \overrightarrow{\sigma_N } ) 
    = 0 $. 
    If
    $ \epsilon (1) \neq \epsilon (2) $, 
    then
    $\mu_{\mathfrak{f}( 1 ), N} = 
           t \circ \mu_{\mathfrak{f} ( 2 ), N} \circ t $
           follows  again from condition (i)
  and, as above, the conclusion follows using Lemma \ref{lemma:2:5}.  
\end{proof}

In particular, equation (\ref{cond:j}) gives that condition (ii) from Theorem \ref{thm:1} is symmetric in 
$ a  $ 
and
$ b $ 
(i.e. does not need to be verified for both pairs
$ (a, b) $ and $ (b, a) $).

\begin{remark}\label{rem:ii}
If in Theorem \ref{thm:1} above we have that 
$ \big( \mu_{b, N} \big)_N $
 are symmetric, condition \emph{(ii)} becomes:
 \begin{itemize}
 \item[(1)] 
 $\# \big\{ (i, j, k) \in [ N ]^3:\ \mu_{a, N} (i, j) = \mu_{b, N}(i, k) 
  \big\} = o(N^2) $
  whenever 
  $ a \neq b $ 
  and
    $ \mu_{a, N} $
     is symmetric
     \item[(2)] $\# \big\{ (i, j, k) \in [ N ]^3:\ \mu_{a, N} (i, j)\in
       \{ \mu_{b, N}(i, k) , \mu_{b, N}( k, j) \}
        \big\} = o(N^2) $
        whenever 
        $ a \neq b $ 
        and
          $ \mathfrak{j}( \mu_{ a, N} : \mu_{a, N}) = o(N^2 ) $.
 \end{itemize}
\end{remark}

For the particular case of two permutations, one being the identity,  Theorem \ref{thm:1} and Remark \ref{rem:ii} have the following consequence.

\begin{cor}\label{cor:id}
Suppose that for each positive integer 
$ N $, 
 $ G_N $ 
 is an $ N \times N $ 
 Gaussian random matrix.
 If 
   $ \big( \sigma_N \big)_N $  
   is a sequence of permutations with each 
   $ \sigma_N $ 
  from 
   $ \mathcal{S}( [ N ]^2 ) $
   such that the following conditions are satisfied
   \begin{enumerate}
   \item[(i)] either  all 
   $ \sigma_N $
   are symmetric, or all 
   $ \sigma_N $
   satisfy
   $ \mathfrak{j} ( \sigma_N : \sigma_N ) = o(N^2)  $;
   \item[(ii)]
   $ \displaystyle
    \# \big\{ ( i, j, k) \in [ N ]^3 :\  \sigma_N (i, j) \in \{ 
    (i,k), ( k , j) \}
    \big\} 
    = o( N^2 )
    $
   \end{enumerate}
   then 
   $ G_N $ 
   and 
   $ G_N ^{ \sigma_N } $
   are asymptotically free.
 \end{cor}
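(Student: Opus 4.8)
The plan is to deduce Corollary~\ref{cor:id} from Theorem~\ref{thm:1} applied with $M_1+M_2 = 2$ and the family $\{\mu_{1,N}, \mu_{2,N}\}$ given by $\mu_{1,N} = \id$ (the identity permutation of $[N]^2$, for which $G_N^{\mu_{1,N}} = G_N$) and $\mu_{2,N} = \sigma_N$. The only work is to check that the two hypotheses of Corollary~\ref{cor:id} imply conditions (i) and (ii) of Theorem~\ref{thm:1}, and to identify the resulting limit family. For condition (i): the identity permutation $\id$ is symmetric (since $t\circ\id\circ t = \id$ acts the same, and more to the point $[G_N]_{i,j} = [G_N^{\id}]_{i,j}$ with $g_{ij}=\overline{g_{ji}}$ makes $G_N$ selfadjoint), so if we set $M_1 \geq 1$ to cover the index $1$ we are fine there; and the index $2$ is covered either by taking $M_1 = 2$ (when all $\sigma_N$ are symmetric) or $M_1 = 1$, $M_2 = 1$ (when $\mathfrak{j}(\sigma_N:\sigma_N) = o(N^2)$), exactly matching the dichotomy in hypothesis (i) of the corollary.

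The substantive point is condition (ii) of Theorem~\ref{thm:1}, which must be verified for the single unordered pair $\{a,b\} = \{1,2\}$; by the symmetry remark following the theorem it suffices to check it for $(a,b) = (2,1)$, i.e.\ with $\mu_{a,N} = \sigma_N$ and $\mu_{b,N} = \id$. Since $\id$ is symmetric, $t\circ\id\circ t$ sends $(i,k)\mapsto(i,k)$ and $(k,j)\mapsto(k,j)$, so the four sets $\{\mu_{b,N}(i,k),\ \mu_{b,N}(k,j),\ t\circ\mu_{b,N}\circ t(i,k),\ t\circ\mu_{b,N}\circ t(k,j)\}$ collapse to $\{(i,k),(k,j)\}$. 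Hence condition (ii) of the theorem reads precisely
\[
\#\big\{(i,j,k)\in[N]^3 :\ \sigma_N(i,j)\in\{(i,k),(k,j)\}\big\} = o(N^2),
\]
which is hypothesis (ii) of the corollary verbatim. (One could equally invoke Remark~\ref{rem:ii}, whose clauses (1) and (2) together with hypothesis (i) give the same thing.)

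With both conditions in place, Theorem~\ref{thm:1} yields that the limit in distribution of $\{G_N,\ G_N^{\sigma_N}\}$ is a free family; the first element is the selfadjoint semicircular limit of $G_N$ of variance $1$, and the second is either a second semicircular element of variance $1$ (in the symmetric case, $M_1 = 2$) or a circular element of variance $1$ (in the other case, $M_2 = 1$). In either case freeness of the pair holds, which is the assertion of the corollary. The only place requiring genuine care is the reduction of the four-set condition (ii) to the two-set condition of the corollary, and this is immediate once one observes that $\mu_{b,N} = \id$ is symmetric so that conjugation by $t$ acts trivially on it; everything else is bookkeeping of the parameters $M_1, M_2$. $\qed$
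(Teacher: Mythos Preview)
Your proof is correct and follows exactly the route the paper intends: the corollary is stated there without proof, as an immediate consequence of Theorem~\ref{thm:1} and Remark~\ref{rem:ii} in the special case $\mu_{1,N}=\id$, $\mu_{2,N}=\sigma_N$. Your observation that $t\circ\id\circ t=\id$ collapses the four-element set in condition~(ii) of Theorem~\ref{thm:1} to $\{(i,k),(k,j)\}$ is precisely the content of Remark~\ref{rem:ii} specialized to $\mu_{b,N}=\id$, so nothing is missing.
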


To simplify the statement of the next result, we shall introduce the following notation. For 
$ \phi, \psi $ 
to permutations from 
$ \mathcal {S} ( [ N ]) $,
denote by
$ \phi \otimes \psi  $
the permutation in 
$ \mathcal{S}([ N ]^2 ) $
given by
$ \phi \otimes \psi (i, j) = ( \phi(i), \psi(j) ) $
and denote by
 $ \mathfrak{c} ( \phi, \psi) $
 the number of fixed point of 
 $ \phi^{-1} \psi $
( i.e.  $
\mathfrak{c}(\phi , \psi) = \#\{ i \in [ N ]:\ \phi (i) = \psi (i)\} $ ).

\begin{thm}\label{thm:tensor}
Let 
$ M_1 $
and
$ M_2 $
be two positive integers and suppose that for each positive integer
 $ N $, 
$\{ \phi_{k, N}, \psi_{k, N}: k \in [ M_1 + M_2 ]\} $
is a family of permutations from 
$ \mathcal{S}([ N ]) $
such that:
\begin{itemize}
\item[(i)]
$  \phi_{k, N} = \psi_{k, N} $
whenever
$ k \leq M_1 $
and
$ \mathfrak{c} ( \phi_{ k N }, \psi_{ k, N } ) = o( N ) $
whenever
$ M_1 \leq k < M_1 + M_2 $.
\item[(ii)] if
$ k \neq p $, 
then
 $ \mathfrak{c} ( \phi_{k, N}, \phi_{p, N})
  + \mathfrak{c} ( \phi_{ k , N }, \psi_{p, N })
  + \mathfrak{c} ( \psi_{k, N}, \psi_{p, N}) = o(N).
  $
\end{itemize}
Then the random matrices
 $ \{ G_N^{ \phi_{k, N} \otimes \psi_{ k, N} }:
  k \in [ M_1 + M_2 ] \} $
 form an asymptotically free family of semicircular, if 
 $ k \leq M_1 $,
 respectively circular, if
  $ M_1 < k \leq M_1 + M_2 $,
  random matrices.
  Moreover, the family 
  $ \big\{
  G_N^{ \phi_{k, N} \otimes \psi_{ k, N} }:
   k \in [ M _1 + M_2 ] 
    \big\} $
    is asymptotically free from its transpose.
\end{thm}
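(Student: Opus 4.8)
The strategy is to apply Theorem \ref{thm:1} to the enlarged family of $2(M_1+M_2)$ permutations obtained by adjoining to $\mu_{k,N}:=\phi_{k,N}\otimes\psi_{k,N}$ the permutations $\mu_{k,N}\circ t$; since $(G_N^\sigma)^t=G_N^{\sigma\circ t}$ for any $\sigma\in\mathcal{S}([N]^2)$, the enlarged family is exactly $\{G_N^{\mu_{k,N}}\}_k$ together with all its transposes. Two elementary identities will be used repeatedly: for any permutations $\alpha,\beta$ of $[N]$,
\[
t\circ(\alpha\otimes\beta)=(\beta\otimes\alpha)\circ t,\qquad t\circ(\alpha\otimes\beta)\circ t=\beta\otimes\alpha .
\]
In particular $\mu_{k,N}\circ t=t\circ(\psi_{k,N}\otimes\phi_{k,N})$; and when $\phi_{k,N}=\psi_{k,N}$ (that is, $k\le M_1$), both $\mu_{k,N}$ and $\mu_{k,N}\circ t$ are symmetric, so in that range $G_N^{\mu_{k,N}}$ and its transpose are selfadjoint. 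One re-orders the enlarged family so that the $2M_1$ symmetric permutations come first, and applies Theorem \ref{thm:1} with parameters $2M_1$ and $2M_2$.

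Call a permutation of $[N]^2$ of \emph{tensor type} if it has the form $\alpha\otimes\beta$, and of \emph{twisted type} if it has the form $t\circ(\alpha\otimes\beta)$; in either case call $(\alpha,\beta)$ its \emph{factors}. The computational core is the identity, obtained by expanding the definition of $\mathfrak{j}$ and observing that in each case the two output coordinates depend on disjoint input variables: for any $\alpha,\beta,\gamma,\delta\in\mathcal{S}([N])$ and any $\rho_1$ with factors $(\alpha,\beta)$ and $\rho_2$ with factors $(\gamma,\delta)$,
\[
\mathfrak{j}(\rho_1:\rho_2)=
\begin{cases}
N\cdot\mathfrak{c}(\beta,\gamma), & \rho_1,\rho_2\text{ of the same type},\\[2pt]
N, & \rho_1,\rho_2\text{ of different types}.
\end{cases}
\]
Moreover, if $\nu$ is either $\mu_{k,N}$ or $\mu_{k,N}\circ t$, then $\nu$ and $t\circ\nu\circ t$ have the same type (tensor if $\nu=\mu_{k,N}$, twisted if $\nu=\mu_{k,N}\circ t$), with factors in $\{\phi_{k,N},\psi_{k,N}\}$.

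Granting this, condition (i) of Theorem \ref{thm:1} holds for the enlarged family: the first $2M_1$ permutations are symmetric, while for $k>M_1$ the formula gives $\mathfrak{j}(\mu_{k,N}:\mu_{k,N})=\mathfrak{j}(\mu_{k,N}\circ t:\mu_{k,N}\circ t)=N\,\mathfrak{c}(\phi_{k,N},\psi_{k,N})=o(N^2)$ by hypothesis (i) of Theorem \ref{thm:tensor}. For condition (ii) of Theorem \ref{thm:1}, equation (\ref{cond:j}) reduces the task, for a pair $\nu_a\neq\nu_b$ of the enlarged family, to showing $\mathfrak{j}(\tau_1:\tau_2)=o(N^2)$ for all $\tau_1\in\{\nu_a,t\circ\nu_a\circ t\}$ and $\tau_2\in\{\nu_b,t\circ\nu_b\circ t\}$. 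If $\nu_a$ and $\nu_b$ have different types (one an original, one a transpose), then every such $\mathfrak{j}(\tau_1:\tau_2)$ equals $N$. If they have the same type, their underlying indices $p\neq q$ must differ (same type and index would force $\nu_a=\nu_b$), and then each $\mathfrak{j}(\tau_1:\tau_2)$ equals $N\,\mathfrak{c}(x,y)$ for some $x\in\{\phi_{p,N},\psi_{p,N}\}$, $y\in\{\phi_{q,N},\psi_{q,N}\}$; applying hypothesis (ii) of Theorem \ref{thm:tensor} to both $(p,q)$ and $(q,p)$, and using that $\mathfrak{c}$ is symmetric, shows all four such $\mathfrak{c}(x,y)$ are $o(N)$, hence $\mathfrak{j}(\tau_1:\tau_2)=o(N^2)$. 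Thus Theorem \ref{thm:1} applies and the enlarged family has a free limiting distribution, semicircular on its first $2M_1$ entries and circular on the last $2M_2$; restricting to the sub-family $\{G_N^{\mu_{k,N}}\}_k$ yields the first assertion, and retaining the transposes as well yields asymptotic freeness from the transpose.

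The only genuine computation is the displayed formula for $\mathfrak{j}$, together with the bookkeeping that conjugation by $t$ and post-composition with $t$ keep a tensor permutation inside the tensor/twisted classes with factors among $\{\phi_{k,N},\psi_{k,N}\}$; this is routine but must be checked in each of the few type combinations. The one conceptually important instance is $\nu_a=\mu_{p,N}$ against $\nu_b=\mu_{p,N}\circ t$ with the \emph{same} index $p$: here they have different types, so the count collapses to $O(N)$ with no hypothesis needed — which is precisely the mechanism behind asymptotic freeness from the transpose. I expect the main obstacle to be organizational: setting up the type/factor formalism so that the four-way case analysis (tensor/twisted $\times$ same/different index) stays under control.
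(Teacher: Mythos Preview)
Your proof is correct and follows essentially the same route as the paper: reduce to Theorem~\ref{thm:1} by computing $\mathfrak{j}$ for tensor-type permutations (obtaining $N\cdot\mathfrak{c}$) and for mixed tensor/twisted pairs (obtaining $N$), then read off conditions (i) and (ii). The only difference is organizational: the paper first verifies Theorem~\ref{thm:1} for the original family $\{\mu_{k,N}\}$ and then, in a second pass, checks condition (ii) for each pair $\mu_{k,N}$, $t\circ\mu_{p,N}$ to obtain freeness from the transpose; you instead enlarge the family to $\{\mu_{k,N},\,\mu_{k,N}\circ t\}$ from the outset and handle everything in one application of Theorem~\ref{thm:1} via your tensor/twisted type formalism. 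Your packaging is a bit cleaner, but the underlying computations (the paper's equations (\ref{j:tensor}) and (\ref{eq:tens:2})) are identical to yours.
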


\begin{proof}
First, note that for any
$ \phi, \psi \in \mathcal{S} ( [ N ]^2 ) $,
 $ t \circ   ( \psi \otimes \psi ) \circ t
 = \psi \otimes \phi $.
 
 Next, note that if
  $ \phi_1, \phi_2, \psi_1, \psi_2  \in \mathcal{S} ( [ N ]) $, then
  \begin{align}\label{j:tensor}
  \mathfrak{j} ( \phi_1 \otimes \psi_1 : \phi_2\otimes \psi_2) =&
  \# \big\{
  (i, j, k):\  \big(\phi_1 ( i) , \psi_1(j)  \big) =
  \big(
  \psi_2 (k), \phi_2( j )
  \big) 
  \big\}\\
  = &
  N \cdot 
  \mathfrak{c} ( \psi_1, \phi_2 ).\nonumber
  \end{align}
If 
 $ \phi_{k, N} = \psi_{k, N} $,
 then
 $ \mu_{ k, N} $ 
 is symmetric, so 
 $ G_N^{ \mu_{k, N}} $ 
 is asymptotically semicircular.
 If 
  $ \mathfrak{c} ( \phi_{ k N }, \psi_{ k, N } ) = o( N ) $,
  then (\ref{j:tensor}) gives that 
 $
  \mathfrak{j}( \mu_{k, N}: \mu_{k, N})= o(N^2)$,
and Lemma \ref{lemma:3:1} implies that
 $ G_N^{ \mu_{ k, N}} $
 is asymptotically circular.
 
 Furthermore, condition (ii) and equation (\ref{j:tensor}) imply that the family of permutations
 $ \big\{ \phi_{ k, N} \otimes \psi_{ k, N }: k \in [ M_1 + M_2 ] 
 \big\} $
 satisfies condition (ii) of Theorem \ref{thm:1}, hence the asymptotic free independence of 
 $ \{ G_N^{ \phi_{k, N} \otimes \psi_{ k, N} }:
   k \in [ M_1 + M_2 ] \} $.
   
   For the last part, the asymptotic freeness from transposes, it suffices to show that condition (ii) of Theorem \ref{thm:1} is satisfied by any pair 
   $ \phi_{k, N} \otimes \psi_{ k, N} $ , 
   $ t \circ  ( \phi_{ p, N} \otimes \psi_{ p, N } ) $
   with
   $ k, p \in [ M_1 + M_2 ] $. 
   To show the last statement, it suffices to proof that each of the terms in the right hand side of equation (\ref{cond:j}) are
   $ o(N^2) $ 
   for 
   $ \sigma = \phi_{k, N} \otimes \psi_{ k, N} $
   and 
   $ \mu = t \circ  ( \phi_{ p, N} \otimes \psi_{ p, N } ) $.
   Moreover, since
    $ \mathfrak{j}( \sigma: \mu ) =
     \mathfrak{j} ( t \circ \mu \circ t : t \circ \sigma \circ t ) $
     and 
     $ \mathfrak{j}( \sigma: \mu ) = \mathfrak{j} ( t \circ \sigma: t \circ \mu ) $, 
     it suffices to show that  for any
         $ \phi_1, \phi_2, \psi_1, \psi_2 \in \mathcal{S}([ N ]) $, 
         we have 
     \begin{align}\label{eq:tens:2}
 \mathfrak{j} ( \phi_1 \otimes \psi_1 :
 t \circ ( \phi_2 \otimes \psi_2 ) )
= \mathfrak{j}( \phi_1 \otimes \psi_1 :
( \phi_2 \otimes \psi_2 ) \circ t  ) = N.
     \end{align}
Indeed,   
   \begin{align*}
\mathfrak{j} ( \phi_1 \otimes \psi_1: t \circ ( \phi_2 \otimes \psi_2 ) ) &
 = 
\# \big\{ (i, j, l):\ \big( \phi_1(i), \psi_1(j) \big) =
\big( \phi_2 (j), \psi_2 (l) \big)
\big\}\\
& = 
\# \big\{ (i, j, l):\ j = \phi_2^{-1} \phi_1 (i), \ 
l = \psi_2^{-1}\psi_1 (j) 
\big\}\\
& = N,
 \end{align*}
 and
    \begin{align*}
 \mathfrak{j} ( \phi_1 \otimes \psi_1: ( \phi_2 \otimes \psi_2 )
 \circ t  ) &
  = 
 \# \big\{ (i, j, l):\ \big( \phi_1(i), \psi_1(j) \big) =
 \big( \psi_2 (j), \phi_2 (l) \big)
 \big\}\\
 & = 
 \# \big\{ (i, j, l):\ j = \psi_2^{-1} \phi_1 (i), \ 
 l = \phi_2^{-1}\psi_1 (j) 
 \big\}\\
 & = N.
  \end{align*}
\end{proof}

\begin{cor}\label{cor:id}
Suppose that for each positive integer 
$ N $, 
 $ G_N $ 
 is an $ N \times N $ 
 Gaussian random matrix.
 \begin{itemize}
  \item[1.] If 
  $ \big( \phi_N \big)_N $ 
  is a sequence of permutations such that each
   $ \phi_N $ 
  is an element
   $ \mathcal{S} ( [ N ]) $
  with 
  $ o(N ) $ 
  fixed points, then
  $ G_N $ , $ G_N^{ \Phi_N \otimes \phi_N } $
  and their transposes form an asymptotically free family.
  \item[2.]If 
    $ \big( \phi_N \big)_N $ ,
    $ \big( \psi_N \big)_N $
    are two sequences of permutations such that for each 
    $ N $,
     $ \phi_N $, 
     $ \psi_N $
     and
     $ \phi^{-1}_N \psi_N $
     are elements of 
     $ \mathcal{S}([ N ]) $
     with
      $ o( N ) $
      fixed points, then
      $ G_N $, 
      $ G_N^{ \phi_N \otimes \psi_N } $
      and their transposes form an asymptotically free family.
 \end{itemize}
\end{cor}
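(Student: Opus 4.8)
The plan is to deduce both parts of Corollary \ref{cor:id} from Theorem \ref{thm:tensor} by choosing the permutation families appropriately and checking that conditions (i) and (ii) of that theorem hold. Throughout, the Gaussian matrix $G_N$ itself is handled by writing it as $G_N^{\id\otimes\id}$, so that the whole collection $\{G_N, G_N^{\Phi_N\otimes\phi_N}\}$ (resp.\ $\{G_N, G_N^{\phi_N\otimes\psi_N}\}$) together with its transpose fits the conclusion of Theorem \ref{thm:tensor} once the hypotheses are verified. The key quantity to control is $\mathfrak{c}(\alpha,\beta)$, the number of coincidences $\alpha(i)=\beta(i)$, for the various pairs of permutations appearing.

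\textbf{Part 1.} Here we take $M_1=2$, $M_2=0$ and the family $\{\phi_{1,N},\psi_{1,N},\phi_{2,N},\psi_{2,N}\}$ given by $\phi_{1,N}=\psi_{1,N}=\id$ (yielding $G_N=G_N^{\id\otimes\id}$) and $\phi_{2,N}=\psi_{2,N}=\phi_N$ (yielding $G_N^{\phi_N\otimes\phi_N}$). Condition (i) of Theorem \ref{thm:tensor} is immediate since both chosen permutations are symmetric ($\phi_{k,N}=\psi_{k,N}$ for $k=1,2$). For condition (ii) with $k=1$, $p=2$ one must check $\mathfrak{c}(\id,\phi_N)+\mathfrak{c}(\id,\phi_N)+\mathfrak{c}(\id,\phi_N)=3\,\mathfrak{c}(\id,\phi_N)$, which equals three times the number of fixed points of $\phi_N$, hence is $o(N)$ by hypothesis. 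Theorem \ref{thm:tensor} then gives that $G_N$, $G_N^{\phi_N\otimes\phi_N}$ and their transposes form an asymptotically free family (all four semicircular of variance $1$).

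\textbf{Part 2.} Now we take $M_1=1$, $M_2=1$, with the $M_1$-block being $\phi_{1,N}=\psi_{1,N}=\id$ (the matrix $G_N$, which is selfadjoint, consistent with $k\le M_1$) and the $M_2$-block being $\phi_{2,N}=\phi_N$, $\psi_{2,N}=\psi_N$ (the matrix $G_N^{\phi_N\otimes\psi_N}$). Condition (i) requires $\mathfrak{c}(\phi_N,\psi_N)=o(N)$; but $\mathfrak{c}(\phi_N,\psi_N)=\#\{i:\phi_N(i)=\psi_N(i)\}=\#\{i:\phi_N^{-1}\psi_N(i)=i\}$ is exactly the number of fixed points of $\phi_N^{-1}\psi_N$, which is $o(N)$ by hypothesis. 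Condition (ii) with $k=1$, $p=2$ becomes $\mathfrak{c}(\id,\phi_N)+\mathfrak{c}(\id,\psi_N)+\mathfrak{c}(\phi_N,\psi_N)=o(N)$: the first two terms are the numbers of fixed points of $\phi_N$ and $\psi_N$ respectively, the third is the number of fixed points of $\phi_N^{-1}\psi_N$, all $o(N)$ by assumption. Theorem \ref{thm:tensor} then yields that $G_N$, $G_N^{\phi_N\otimes\psi_N}$ and their transposes are asymptotically free, the first being semicircular and the second circular of variance $1$.

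The argument is essentially bookkeeping: the only real content is recognizing that each term $\mathfrak{c}(\cdot,\cdot)$ appearing in the hypotheses of Theorem \ref{thm:tensor} translates into a count of fixed points of $\id$, $\phi_N$, $\psi_N$, or $\phi_N^{-1}\psi_N$, and that the stated hypotheses on $o(N)$ fixed points cover precisely these. The mild subtlety — and the step most worth stating carefully — is the identity $\mathfrak{c}(\phi_N,\psi_N)=\#\{i:\phi_N^{-1}\psi_N(i)=i\}$, i.e.\ that coincidences of $\phi_N$ and $\psi_N$ are the same as fixed points of $\phi_N^{-1}\psi_N$ (this is built into the definition of $\mathfrak{c}$ given just before Theorem \ref{thm:tensor}), so that Part 2's hypothesis on $\phi_N^{-1}\psi_N$ is exactly what condition (i) needs. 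No obstacle beyond this; everything else is a direct appeal to Theorem \ref{thm:tensor}.
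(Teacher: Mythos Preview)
Your proof is correct and follows exactly the intended route: the paper states this corollary immediately after Theorem \ref{thm:tensor} with no separate proof, and verifying conditions (i) and (ii) of that theorem with the choices $(\phi_{1,N},\psi_{1,N})=(\id,\id)$ and $(\phi_{2,N},\psi_{2,N})=(\phi_N,\phi_N)$ or $(\phi_N,\psi_N)$ is precisely the required bookkeeping. One small slip in Part 2: when you check condition (ii) for $(k,p)=(1,2)$, the three terms are actually $\mathfrak{c}(\id,\phi_N)+\mathfrak{c}(\id,\psi_N)+\mathfrak{c}(\id,\psi_N)$, not $\mathfrak{c}(\id,\phi_N)+\mathfrak{c}(\id,\psi_N)+\mathfrak{c}(\phi_N,\psi_N)$; the quantity $\mathfrak{c}(\phi_N,\psi_N)$ enters only through condition (i), as you correctly note elsewhere. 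This does not affect the argument, since the correct terms are still fixed-point counts of $\phi_N$ or $\psi_N$ and hence $o(N)$.
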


 Two transforms that appear in literature and are given by entry permutations are the partial transpose, relevant in Quantum Information Theory (\cite{horod}, \cite{aubrun}) and the ``mixing map'' appearing in Physics literature (see \cite{zycz}, \cite{billiard}).

Following \cite{mingo-popa-wishart}, \cite{mingo-popa-wishart2}, we will define the 
$ N \times N $ 
partial transpose 
$ \Gamma_ N $
as below. First consider the bijection 
 $ \varphi : [ N^2 ]^2  \rightarrow [ N ]^4 $
given by
 $ \varphi (i, j) = ( a, b, c, d ) $
 whenever
 \begin{align*}
  (i, j) = \big( ( a-1)N + b, (c-1)N + d \big). 
\end{align*}
Then take 
$ \gamma: [ N ]^4 \rightarrow [ N ]^4 $
given by
$ \gamma ( a, b, c, d) = ( a, d, c, b )$. The $ N \times N $ partial transpose is the map
 \begin{align*}
 \Gamma_N = \varphi^{-1} \gamma \varphi 
 \end{align*}
 
 Intuitively (see \cite{aubrun}, \cite{banica-nechita}), we see a 
 $ N^2 \times N^2 $ 
 matrix as a 
 $ N \times N $ 
 block-matrix, each entry being a 
 $ N \times N $
  matrix. Then 
   $ \Gamma_N $
   is obtained by transposing each block, but keeping the positions of the blocks.
   
   With the notations from above, the ``mixing map''
    $ \mu_N $ 
    is defined via 
    \begin{align*}
     M_N =  \varphi^{-1} \mu \varphi  
     \end{align*}
    with
    $ \mu: [ N ]^4 \rightarrow [ N ]^4 $
    given by
    $ \mu(a, b, c, d) = (a, c, b, d) $.
    
  A consequence of Theorem \ref{thm:1} and   Corollary \ref{cor:id} is the following.
    
    \begin{cor}\label{cor:trio}
  The random matrices
   $ G_{ N^2 } $,
   $ G_{ N^2 }^{ \Gamma_N } $
   and
   $ G_{ N ^2 }^{ M_N } $
   form an asymptotically free family (as 
   $ N \rightarrow \infty $).
    \end{cor}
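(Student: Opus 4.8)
The plan is to apply Theorem \ref{thm:1} (equivalently, Corollary \ref{cor:id} pairwise, plus the general statement) to the three permutations $\id$, $\Gamma_N$, $M_N$ in $\mathcal{S}([N^2]^2)$, taking $M_1 = 3$, $M_2 = 0$ (all three are symmetric, so all three random matrices are asymptotically semicircular). Thus it suffices to verify condition (i) — symmetry of each of $\id$, $\Gamma_N$, $M_N$ — and condition (ii) for each of the three pairs $(\id, \Gamma_N)$, $(\id, M_N)$, $(\Gamma_N, M_N)$. Since each permutation is symmetric, Remark \ref{rem:ii} applies, and for a symmetric $\mu_{a,N}$ paired with a symmetric $\mu_{b,N}$ condition (ii) reduces to showing
\[
\# \big\{ (i, j, k) \in [N^2]^3 :\ \mu_{a, N}(i, j) = \mu_{b, N}(i, k) \big\} = o(N^4),
\]
i.e. $o\big((N^2)^2\big)$, since the ambient index set here has size $N^2$.

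First I would transport everything through the bijection $\varphi : [N^2]^2 \to [N]^4$ so that the three permutations become, on $[N]^4$: the identity $(a,b,c,d)\mapsto(a,b,c,d)$; the partial transpose $\gamma:(a,b,c,d)\mapsto(a,d,c,b)$; and the mixing map $\mu:(a,b,c,d)\mapsto(a,c,b,d)$. Symmetry of $\id$ is trivial; for $\gamma$ one checks $t\circ\gamma\circ t = \gamma$ since swapping $(a,b)\leftrightarrow(c,d)$ and then applying $\gamma$ agrees with applying $\gamma$ then swapping; similarly $t\circ\mu\circ t=\mu$. So condition (i) holds. Then, writing an index $i \in [N^2]$ as a pair $(a,b)\in[N]^2$ via $i=(a-1)N+b$, each of the three counting conditions in (ii) becomes a system of equations among the $N$-valued coordinates. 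For the pair $(\id,\Gamma_N)$: parametrize $i=(a,b)$, $j=(c,d)$, $k=(e,f)$; then $\id(i,j)=\Gamma_N(i,k)$ reads $(a,b,c,d)=(a,f,e,b)$, forcing $c=e$, $d=b$, $f=b$, with $a,b,c$ free and $d,e,f$ determined — this gives exactly $N^3$ triples, which is $o(N^4)$. The pair $(\id, M_N)$: $(a,b,c,d)=(a,c,b,d)$ forces $b=c$ (and $d$ must equal the corresponding coordinate, pinning it), again $O(N^3)=o(N^4)$. The pair $(\Gamma_N, M_N)$: $\gamma(i,j) = \mu(i,k)$ with $i=(a,b)$, $j=(c,d)$, $k=(e,f)$ reads $(a,d,c,b)=(a,e,b,f)$, forcing $d=e$, $c=b$, $b=f$, once more $O(N^3)$ free choices. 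In every case the count is $\le N^3 = o(N^4)$, so condition (ii) of Theorem \ref{thm:1} holds for all three pairs.

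With (i) and (ii) verified, Theorem \ref{thm:1} yields that $\{G_{N^2}^{\id}, G_{N^2}^{\Gamma_N}, G_{N^2}^{M_N}\} = \{G_{N^2}, G_{N^2}^{\Gamma_N}, G_{N^2}^{M_N}\}$ converges in distribution to a free family of three semicircular elements of variance $1$, which is precisely asymptotic freeness in the sense of Definition \ref{defn:21}. I do not expect any genuine obstacle here: the whole argument is a bookkeeping exercise in solving three small linear systems over $[N]$-valued coordinates and checking that the solution set has dimension strictly less than $4$ (in units of $\log_N$), so that the relevant count is $o(N^4)$. The only point requiring a little care is keeping the roles of the four quantum-information coordinates $(a,b,c,d)$ straight when composing with $t$ and when reading off which coordinates are free versus forced — in particular, making sure one does not miscount because $\id$, $\gamma$, and $\mu$ each fix the first coordinate $a$, which contributes a common free factor of $N$ but does not affect the $o(N^4)$ conclusion.
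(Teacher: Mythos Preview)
Your overall strategy matches the paper's, but there is a genuine error: the mixing map $M_N$ is \emph{not} symmetric. In the $[N]^4$ coordinates, the transpose $t$ becomes $\widetilde{t}(a,b,c,d)=(c,d,a,b)$, and one computes
\[
\widetilde{t}\circ\mu\circ\widetilde{t}(a,b,c,d)
= \widetilde{t}\big(\mu(c,d,a,b)\big)
= \widetilde{t}(c,a,d,b)
= (d,b,c,a),
\]
which is not $\mu(a,b,c,d)=(a,c,b,d)$. So your verification of condition (i) fails for $M_N$, and you cannot take $M_1=3$, $M_2=0$; in particular $G_{N^2}^{M_N}$ is asymptotically \emph{circular}, not semicircular. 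The paper instead computes $\mathfrak{j}(M_N:M_N)=N^3=o\big((N^2)^2\big)$, placing $M_N$ on the non-symmetric side of condition (i).

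This error propagates to your check of condition (ii). Because $M_N$ is not symmetric, Remark~\ref{rem:ii}(1) does not apply to the pairs involving $M_N$; you must use Remark~\ref{rem:ii}(2), which requires bounding the \emph{two} sets
\[
\{(i,j,k):\ M_N(i,j)=\mu_{b,N}(i,k)\}\quad\text{and}\quad
\{(i,j,k):\ M_N(i,j)=\mu_{b,N}(k,j)\}
\]
for $\mu_{b,N}\in\{\id,\Gamma_N\}$, not just the first. (There is also a slip in your $(\id,M_N)$ computation: you wrote $\mu(i,j)$ where you needed $\mu(i,k)$, i.e.\ the right-hand side should be $(a,e,b,f)$, not $(a,c,b,d)$.) The paper carries out exactly these extra checks and finds each count to be $O(N^3)$. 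Once you correct the symmetry claim and add the missing ``$(k,j)$'' counts, your argument coincides with the paper's.
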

    
    \begin{proof}
    Let 
    $ \widetilde{t} : [ N ]^4 \rightarrow [ N ]^4 $
    be given b,
     $ \widetilde{t} ( a, b, c, d) = (c, d, a, b ) $, 
     that is
     $ t = \varphi^{-1} \widetilde{t} \varphi $.
     
    Note that 
    $ \Gamma_N $ 
    is symmetric, since
     $ \gamma = 
      \widetilde{t} \gamma \widetilde{t} $ 
      and that
    \begin{align*}
    \mathfrak{j} ( M_N: M_N ) & = 
    \# \big\{ (i, j, k)\in [ N^2 ]^3 : M_N (i, j) = t\circ M_N \circ t (k, j)
    \big\}\\
    = & \#\big\{(a, b, c, d, e, f) \in [ N ]^6 :
    \mu (a, b, c, d ) = 
    \widetilde{t} \mu \widetilde{t} (e, f, c, d)
    \big\}\\
    = & \#\big\{(a, b, c, d, e, f) \in [ N ]^6 :
    (a, c, b, d ) = (d, f, c, e ) \big\}\\
    = & N^3 = o\big((N^2)^2 \big).
    \end{align*}
    So condition (i) of Theorem \ref{thm:1} is satisfied. It suffices then to show that the identity, 
    $ \Gamma_N $  
    and 
    $ M_N $
    satisfy condition (ii) of Theorem \ref{thm:1}.
    And indeed
    \begin{align*}
    \# \big\{ (i, j, k) \in [ N^2]^3: \ & \Gamma_N (i, j) = (i, k)
    \big\}\\
    &= 
    \# \big\{ (a, b, c, d, e, f) \in [ N ]^6:\ (a, d, c, b) = (e, f, c, d )
    \big\}
    = N^3
    \end{align*}
    \begin{align*}
        \# \big\{  & (i, j, k)  \in [ N^2]^3: \  M_N (i, j) \in
        \{ (i, k), (k, j) \}
        \big\}\\
        &= 
        \# \big\{ (a, b, c, d, e, f) \in [ N ]^6:\ 
        (a, c, b, d) \in 
        \{  (e, f, c, d ), ( a, b, e, f ) \}
        \big\}
        = O(N^3 )
        \end{align*}
        and
       \begin{align*}
              \# \big\{  & (i, j, k)  \in [ N^2]^3: \  M_N (i, j) \in
              \{ \Gamma_N(i, k), \Gamma_N(k, j) \}
              \big\}\\
              &= 
              \# \big\{ (a, b, c, d, e, f) \in [ N ]^6:\ 
              (a, c, b, d) \in 
              \{  (a, f, e, b ), ( e, d, c, f ) \}
              \big\}
              = O(N^3 ),
              \end{align*}  
     hence the conclusion follows from Remark \ref{rem:ii}.         
    \end{proof}
    
    \section{Remarks}
    
    This last section contains some comments, mostly on the conditions from the statement of Theorem \ref{thm:1}.
    \begin{remark}\label{remark:4:1}
    If
     $ (\mu_{ 1, N })_N $ 
     and 
     $ ( \mu_{ 2, N })_N $
     are two family of permutations, each
      $ \mu_{k, N}$
       from 
      $ \mathcal{S}([ N ]^2) $,
     satisfying condition \emph{(ii)} but not condition \emph{(i)} 
     of Theorem \ref{thm:1}, then
     $ G_N^{\mu_{1, N}} $ 
     and
     $ G_N^{\mu_{2, N} } $
     are not necessarily asymptotically free,  nor are they necessarily asymptotically free from matrices with constant coefficients. 
    \end{remark}
    
    \begin{proof}
  We shall construct an example of such 
    $ \{\mu_{k, N} \}_{ N \in \mathbb{N} } $, $ k = 1, 2$.
    It suffices to specify 
    $\mu_{k, 2N } $ 
     and take 
     $ \mu_{k, 2N + 1 }(i, j)= \mu_{k, 2N }(i,j) $
     for 
     $ i, j \leq 2N $.
     
    Remark that
     $ G_{2N} = \left(
      \begin{array}{cc} G_{ 1, N } & X_N \\ X_N^\ast & G_{ 2, N }  \end{array}\right) $     
    where
    $ G_{ 1, N }, G_{2, N}, X_N $ 
    are
    $ N \times N $ 
    random matrices,
    $ G_{1, N} $ 
    and
    $ G_{2, N} $ 
    are Gaussian,
    $ X_N $ 
    is Ginibre,  upper-diagonal entries of 
    $ G_{1, N} $,
    and
     $ G_{2, N} $
     and  entries of 
     $ X_N $
     form an independent family of complex (off the diagonals of 
     $ G_{1, N} $, $ G_{2, N} $) and real (on the diagonals of
      $ G_{1, N}$, $ G_{2, N} $) Gaussian distributed random variables of mean
      $ 0 $
      and variance
      $ \frac{1}{\sqrt{2N} } $.
      
      Let
       $ \varphi_N : [ 2N] \rightarrow [ N ] $ 
       given by
       $ \varphi(k)= p $
        if and only if 
        $ k \equiv  p $ (mod $ N $). 
        Let
         $ \omega_N  \in \mathcal{S}([ N ]^2) $,
respectively
        $ \gamma_N \in \mathcal{S}( [ 2N ]^2 ) $
        be given by
        \begin{align*}
        \omega_N (i, j) = &
         \big(
        \varphi_N (i+1), \varphi_N (j +2 ) 
        \big)\\
        \gamma_N ( i, j) = &
        \big( i - \varphi_N (i ) + \varphi_N ( j ), 
        j - \varphi_N (j) + \varphi_N (i) 
        \big)
        \end{align*}
   
        Next, we define  the permutations
 $ \mu_{1, 2N}, \mu_{2, 2N}  \in
 \mathcal{S}([ 2N ]^2 ) $ 
 via
 \begin{align*}
 \mu_{1, 2 N} (i, j) & =
 \left\{ 
            \begin{array}{ll}
            \omega_N (i, j) & \textrm{ if }  i, j \leq N \\
            (i, j)  & \textrm{ otherwise }
            \end{array}
            \right.\\
    \mu_{2, 2N} (i, j) & = \gamma_N \circ \mu_{1, 2N} .       
 \end{align*}
I.e.
       $ G_{2N}^{\mu_{1, 2N}} = \left(
      \begin{array}{cc}
       G_{1, N}^{\omega_{ N} } & X_N \\ 
      X_N^\ast  & G_{2, N} 
     \end{array}\right) $
     and
     $  G_{2N}^{\mu_{2,  2N}} = \left(
        \begin{array}{cc}
       (  G_{1, N}^{\omega_{ N} } ) ^t&  X_N^t\\ 
       ( X_N^\ast)^t  &  G_{2, N}^t  
       \end{array}\right) $.
       
From Corollary \ref{cor:id}, 
$ G_{1, N}^{\omega_N} $ 
is asymptotically circular and free from its transpose.
Furthermore,  since 
       $ G_{2, N} $
       and
       $ X_N $
       are unitarily invariant with independent entries, 
 using the result from \cite{mingo-popa-transpose}, it follows that      
   the ensembles 
    $ \{  G_{2, N} \}_N  $,
     $ \{ G_{ 2,  N}^t \}_N $ ,
     $ \{ X_N\}_N $, $ \{ X_N^t \}_N $
     and 
    $ \{ G_{1, N}^{ \omega_{ N}},  ( G_{1, N}^{ \omega_{ N}} )^t \}_N $
    are asymptotically free.
    
      Hence the asymptotic joint distribution of
       $ G_{2N}^{\mu_{N, 1}} $ 
       and 
       $ G_{2N}^{\mu_{N, 2} } $ 
       is the joint distribution with respect to  
    $ \phi \circ \tr $
    of 
        $ A =\left(\begin{array}{c c}
          c_1 & c_2 \\
          c_2^\ast & s_1
          \end{array} \right) $ 
          and
           $ B = \left(  \begin{array}{c c}
            c_3 & c_4 \\
            c_4^\ast & s_2 
            \end{array}\right)$,
            where 
     $ \{ s_1, s_2, c_1, \dots, c_4 \} $
        is a free family from some noncommutative probability space 
           $ ( \mathcal{A}, \phi ) $,
       with 
            $ s_1, s_2 $, 
           respectively 
           $ c_1, c_2, c_3, c_4 $ 
  semicircular, respectively circular, of mean $ 0 $ and variance
            $\displaystyle \frac{1}{\sqrt{2} } $.   
                  
 Therefore, we have that 
 \begin{align*}
 \phi\circ \tr ( A^2) = 
 \phi\circ \tr ( B^2)  = 
             \frac{1}{2}
            \phi \big(
             c_1^2 +c_2 c_2^\ast + c_2^\ast c_2  + s_1^2 
            \big) =\frac{3}{4},
 \end{align*}
       while 
              $ \phi\circ \tr ( A^2 \cdot B^2 ) $
              equals  
          \begin{align*}
   \phi\circ \tr &
      \left(
       \left(\begin{array}{c c}
              c_1^2 + c_2c_2^\ast &
               c_1c_2  + c_2 s_1\\
              c_2^\ast c_1 + s_1 c_2^\ast  &  c_2^\ast c_2 + s_1^2
              \end{array} \right)
              \cdot
        \left(\begin{array}{c c}
                 c_3^2 + c_4c_4^\ast & c_3c_4  + c_4 s_2\\
                 c_4^\ast c_3 + s_2 c_4^\ast  &  c_4^\ast c_4 + s_2^2
                 \end{array} \right)       
      \right)\\
      =  &  \frac{1}{2}
       \phi   \big(
      ( c_1^2 + c_2c_2^\ast ) \cdot
      (c_3^2 + c_4c_4^\ast ) 
       + 
       ( c_1 c_2 + c_2 s_1 ) 
       \cdot
       ( c_3 c_4 + c_4 s_2 ) 
       \\
      &
      \hspace{1cm}  +  (c_2^\ast c_1 + s_1 c_2^\ast ) 
       \cdot 
       (c_3 c_4 + c_4 s_2 ) 
       + 
       ( c_2^\ast c_2 + s_1^2 )
       \cdot
       ( c_4^\ast c_4 + s_2^2 )
      \big)
        =  \frac{5}{8}.
           \end{align*}  
Henceforth 
$\phi(A^2 B^2) \neq  \phi(A^2)\cdot \phi(B^2)  $,
  so  $ A $ and $ B $ are not free.
  
   For the second part of the statement, denote by 
   $I_N $
    the 
    $ N \times N $
 identity matrix, and consider the 
  $ 2 N \times 2 N $
     matrices with constant coefficients
      $Z_N = \left( \begin{array}{c c } 
           I_N & 2 I_N \\
           0 & I_N 
           \end{array}\right) 
           $ 
           and 
          $ T_N  = \left( 
           \begin{array} { c c }
           I_N & - I_N \\
           2 I_N & - I_N
           \end{array}
           \right).
           $
   Then the asymptotic joint  distribution of 
   $ T_N, Z_N $
   and  
  $ G_{2N}^{ \mu_{1, N}} $ 
  is the joint distribution with respect to
  $ \phi \circ t $
  of the matrices   
    $ Z  = \left( 
              \begin{array}{ c c}
              1 & 2 \\ 0 & 1 
              \end{array}
              \right)$, 
              $ T = \left(
              \begin{array} { c c }
              1 & -1 \\
               2 & -1 
              \end{array}
               \right) $
 and
 $ A $
 from above.  
 
   If 
   $ A $
   were free from 
   $ \{ Z, T \} $, 
   then, using that 
       $ \tr \circ \phi (A) = \tr (T) = 0 $, 
       we would have that
       \begin{align*}
  \phi \circ \tr ( A Z A T ) 
  =  &
   \phi \circ \tr ( A ( Z - \tr (Z ) ) A  T ) + 
   \tr (Z) \phi \circ \tr ( A^2 T )\\ 
   = & 
   \tr(Z) \phi\circ \tr (A^2) \tr (T) = 0.
       \end{align*}
   But, since
    $\{ s_1, c_1, c_2\} $ 
    form a free family,
                     \begin{align*}
   \phi \circ & \tr (A Z A T )  =
                     \tr \circ \phi \left ( 
                     \left(\begin{array}{c c}
                     c_1 & c_2 \\
                     c_2^\ast & s_1
                     \end{array} \right)
                     \cdot 
                      \left( 
                     \begin{array}{ c c}
                     1 & 2 \\ 0 & 1 
                     \end{array}
                     \right)
                     \cdot
                     \left(\begin{array}{c c}
                     c_1 & c_2 \\
                     c_2^\ast & s_1
                     \end{array} \right)
                     \cdot
                     \left(
                     \begin{array} { c c }
                     1 & -1 \\
                     2 & -1 
                     \end{array}
                     \right)
                      \right) \\
                         = &
                       \frac{1}{2}
                       \phi \big(  
                       c_1( c_1 + 2 c_2 ) + (2c_1 + c_2 )( c_2^\ast + 2 s_1)
                       - c_2^\ast (c_1 + c_2 )  - (2c_2^\ast + s_1 ) ( c_2^\ast + s_1)
                       \big)
                        = \frac{1}{4}.
                     \end{align*}                                
    \end{proof}
    
    \begin{remark}\label{rem:4.2}
   There exists families of permutations
      $ (\sigma_N)_N $ 
      and 
      $ ( \tau_N )_N $, each
       $ \sigma_N, \tau_N $
        from 
       $ \mathcal{S}([ N ]^2) $,
      satisfying condition \emph{(i)} but not condition \emph{(ii)} 
      of Theorem \ref{thm:1}, such that
      $ G_N^{\sigma_N} $ 
      and
      $ G_N^{\tau_N } $
      are asymptotically free.
    \end{remark}
    
    \begin{proof}
    We shall show the property for each 
    $ \sigma_N $
     the identity permutation and
    $ \tau_N $
    given by
    \begin{align*}
    \tau_N( i, j) = ( \varphi_N ( i + j), j ) 
    \end{align*}
    where, as in the proof of Remark \ref{remark:4:1}, 
    $ \varphi_N : [ 2 N ] \rightarrow [ N ] $
    is given by
   $  \varphi( i) = k $
    if and only if 
     $ k \equiv i $ mod  $ N $.
 We have that 
        \begin{align*}
         \mathfrak{j} ( \tau_N : \tau_N )
         & =
          \#\big\{ (i, j, k):\
          ( \varphi_N ( i + j), j ) = ( k, \varphi_N ( j +  k ) ) 
          \big\}\\
           & = 
           \#\big\{ (i, j, k):\  \varphi_N ( j +  k ) = j 
           \textrm{ and }
           \varphi_N ( i + j) = k 
           \big\}\\
           & = 
                  \#\big\{ (i, j, k):\  k = N = \varphi_N ( i + j ) 
                  \big\}
                   = O(N)
        \end{align*}
        and
    \begin{align*}
    \#\big\{ (i, j, k): \tau_N (i, j) \in  \{ (i, k),  ( k, j) \} 
    \big\} & \geq
    \#\big\{ (i, j, k): ( \varphi_N ( i + j), j )  = (k, j) \big\}\\
    \geq &
    \big\{
    (i, j, k): k = \varphi_N ( i + j )
    \big\}
     = N^2
    \end{align*}
  hence
     $ \sigma_N $ 
    and
     $ \tau_N $ 
     satisfy condition (i) but
     do not satisfy condition (ii) of Theorem \ref{thm:1}.
     
     Let
      $ m $ 
      be a positive integer and let
      $ \overrightarrow{\sigma_N }= ( \sigma_{1, N}, \sigma_{2, N}, \dots, \sigma_{ m, N})  $
      be such that
   $ ( \sigma_{k, N})_N \in
    \big\{  
    (\textrm{Id}_N )_N , ( \tau_N)_N , ( t\circ \tau_N \circ t )_N  
   \big\} $.  
As in the proof of Theorem \ref{thm:1}, using that
 $ ( G_N^{ \tau_N} )^\ast = G_N^{ t \circ \tau_N \circ t } $,
 to show the asymptotic free independence of 
$ G_N $ 
and
$ G_N^{\tau_N } $
it suffices to prove that   
\begin{align} \label{V:tau}
\lim_{N \rightarrow \infty } \mathcal{V} ( \pi, \overrightarrow{\sigma_N } ) = 0
\end{align}  
whenever
 $ \pi \in NC_2( m ) $ 
 has the property that 
 $ \sigma_{k, N} = \textrm{Id}_N $
 and 
 $ \sigma_{ l , N } \neq \textrm{Id}_N $
 for some
  $ (k, l) \in \pi $.
  
  We shall prove (\ref{V:tau}) by induction on 
   $m $.
  If 
   $ m = 2 $, 
   then 
   $ \pi = (1, 2) $
   and
   \begin{align*}
   \mathcal{V} ( \pi, \overrightarrow{\sigma_N}) = &
   \frac{1}{N} \sum_{ {i, j} \in [ N ] }
   E \big( g_{\sigma_{1, N}(i, j) } g_{\sigma_{2, N} (j, i) } \big)\\
   \leq &\frac{1}{N^2} \# \big\{
   (i, j) \in [ N ] :\ (i, j) \in \{ ( \varphi_N (i + j), j ), ( i, \varphi_N ( i + j ) )
   \}
   \big\}
    = \frac{2N}{N^2}.
    \end{align*}
    
    For the induction step, note that 
    $ \pi $ 
    has a block consisting of two consecutive elements 
    $ ( k, k + 1) $. 
    If 
    $ \sigma_{ k, N } = t \circ \sigma_{k + 1, N }\circ t $,
    then the result follows from Lemma \ref{lemma:2:5}.
    If
     $ \sigma_{k, N} = \sigma_{k+1 N}
      \in \{ \tau_N, t \circ \tau_N \circ t \} $,
      then 
      $ \mathfrak{j} ( \tau_N: \tau_N )
       = \mathfrak{j} ( t \circ \tau_N \circ t : t \circ \tau_N \circ t ) = o(N^2) $ 
      and Lemma \ref{lemma:ik1} give that
       $ \displaystyle \lim_{ N \rightarrow \infty} 
       \mathcal{V}( \pi, \overrightarrow{ \sigma_N }) = 0 $.
      So it suffices to show that (\ref{V:tau}) holds true when
      one of the permutations 
      $ \sigma_{ k, N} $,
      $ \sigma_{ k + 1, N } $
      is 
      $ \textrm{Id}_N $
      and the other is 
      $ \tau_N $
       or
        $ t \circ \tau_N \circ t $.
       Eventually taking adjoints and a circular permutation, we can suppose that
        $ k = m-1 $,
        i.e. 
        $ \pi = \pi^\prime \oplus (m -1, m ) $,
         and that
        $ \sigma_{k, N} = \textrm{Id}_N $.
        
        Similarly to the proof of Lemma \ref{lemma:ik}, consider the sets 
          $ \{ P_k \}_{ 1 \leq k \leq \frac{m}{2} } $
        given by
           $ P_1 = \{ a(1), \pi ( a (1))\} $
           and 
           $ P_{k +1} =  P_k \cup \{ a(k+1), \pi ( a (k+1))\} $
           where the sequence
            $ \{ a(k)\} _{1 \leq k \leq \frac{m}{2} } $
            is given by
            $ a(1) = 1 $ 
            and 
            $ a(k + 1) = \min \{ l \in [ m ] \setminus P_k \} $.
 The condition 
        $ \pi = \pi^\prime \oplus (m-1, m) $
        gives that
       $ P_{ \frac{m}{2} - 1} = [ m -2 ]  $   
       and
       $ a ( \frac{m}{2}) = m-1 $. 
       
       Again, as in the proof of Lemma  \ref{lemma:ik}, we define 
define
    \begin{align*}
    A_k = \{
     \overrightarrow{\alpha} \in [ N]^{4k} :
    \overrightarrow{\alpha} = \oi [ P_k ] 
    \textrm{ for some }
    \oi \in I(N, m)
    \textrm{ such that }
    v ( \pi, \overrightarrow{\sigma}, \oi) \neq 0
    \},
    \end{align*}
  and  properties 
  $(\mathfrak{p}.1)$, 
  $(\mathfrak{p}.2)$
   and
    $(\mathfrak{p}.4)$
     give that  
     \begin{align*}
     \# A_{ \frac{m}{2} -1} \leq N^{\frac{m}{2}}.
     \end{align*} 
     It suffices to show that for each 
  $ \overrightarrow{\alpha} \in  A_{ \frac{m}{2} -1} $
  there exists a unique 
  $ \overrightarrow{i} \in I(N, m) $
  such that 
   $   v ( \pi, \overrightarrow{\sigma}, \oi) \neq 0 $
   and
  $ \overrightarrow{\alpha} = \oi [ P_k ] $.
  This implies that 
  $ \# A_{ \frac{m}{2} } = \# A_{ \frac{m}{2} -1 } $, 
 and (\ref{V:tau}) follows, since
  \begin{align*}
  \mathcal{V} ( \pi, \overrightarrow{ \sigma_N }) 
  =
  N^{ -\frac{m}{2} -1} \cdot \# A_{ \frac{m}{2} } = o(N).
  \end{align*} 
  
  Fix
   $ \overrightarrow{\alpha} \in  A_{ \frac{m}{2} -1} $ 
   and let
   $ \overrightarrow{i} = 
   ( \overrightarrow{\alpha},
    i_{ m-1}, i_{ - (m-1) }, i_m, i_{ -m} ) \in I(N, m) $
    such that 
     $   v ( \pi, \overrightarrow{\sigma}, \oi) \neq 0 $
     and
    $ \overrightarrow{\alpha} = \oi [ P_k ] $.
     The condition 
           $   v ( \pi, \overrightarrow{\sigma}, \oi) \neq 0 $
           gives that
            \begin{align*}
            \sigma_{1, N}( i_{ m-1}, i_{ -(m-1)}) 
            = 
            t \circ \sigma_{2, N} ( i_{ m }, i_{ - m }). 
            \end{align*}
    Note that
    $ i_{ -(m-1)} = i_{m} $
    from the definition of 
    $ I( N, m) $,
     and that
     $ i_1 = i_{ -m }$ 
     and
      $ i_{-(m-2)} = i_{ m -1} $
      are components of 
      $ \overrightarrow{\alpha} $, 
      si they are fixed. 
    Since 
    $ \sigma_{1, N} = \textrm{Id}_N $,
    we obtain     
    \begin{align}
    \label{last}
    (i_{ m -1}, i_m ) = t \circ \sigma_{2, N} ( i_m, i_{ -m }) 
    \end{align}
 If 
 $ \sigma_{2, N} = \tau_N $,
  equation (\ref{last}) gives that
  $ i_{ m } = \varphi_N ( i_m + i_{ - m }) $;
  if
  $ \sigma_{2, N} = t \circ \tau_N \circ t $, 
    equation (\ref{last}) gives that
    $ i_{ m -1 } = \varphi_N ( i_m + i_{ - m } ) $.
    In either case, 
    $ i_m $ 
    is uniquely determined by the pair
    $ ( i_{ m-1}, i_{ - m }) $,
    i.e. by
    $ \overrightarrow{ \alpha} $, 
    so the proof is complete.
    \end{proof}

         %



\bibliographystyle{alpha}


\end{document}